\newtheorem{theorem}{Theorem}
\newtheorem{definition}[theorem]{Definition}
\newtheorem{lemma}[theorem]{Lemma}
\newtheorem{proposition}[theorem]{Proposition}
\newtheorem{corollary}[theorem]{Corollary}
\newtheorem{remark}[theorem]{Remark}
\newcommand{\ox}{\otimes}
\newcommand{\Ox}{\bigotimes}
\newcommand {\dsty}{\displaystyle}
\newcommand{\0}{{\bm 0}}
\newcommand{\1}{{\bf 1}}
\newcommand{\A}{\bm{A}}
\newcommand{\B}{\bm{B}}
\newcommand{\Bmatr}[1]{\begin{bmatrix}\displaystyle #1 \end{bmatrix}}
\newcommand{\Cc}{{\cal C}}
\newcommand{\C}{\bm{C}}
\newcommand{\D}{\bm{D}}
\newcommand{\E}{\bm{E}}
\newcommand{\Fc}{{\cal F}}
\newcommand{\F}{\bm{F}}
\renewcommand{\H}{\bm{H}}
\newcommand{\I}{\bm{I}}
\newcommand{\K}{\bm{K}}
\newcommand{\Lam}{\bms{\Lambda}}
\newcommand{\M}{\bm{M}}
\newcommand{\Pc}{{\cal P}}
\renewcommand{\P}{\bm{P}}
\newcommand{\Q}{\bm{Q}}
\newcommand{\Rc}{{\cal R}}
\renewcommand{\S}{\bm{S}}
\newcommand{\Smh}{\hat{\bm{S}}}
\newcommand{\Sm}{\bm{S}}
\newcommand{\W}{\bm{W}}
\newcommand{\X}{\bm{X}}
\newcommand{\an}[1]{\begin{align}#1\end{align}}
\newcommand{\ab}[1]{\begin{align*}#1\end{align*}}
\newcommand{\desclist}[1]{\begin{description}#1\end{description}}
\newcommand{\bms}[1]{{\boldsymbol{#1}}}
\newcommand{\bm}[1]{{\bf #1}}
\newcommand{\ch}{\hat{c}}
\newcommand{\df}[2]{\displaystyle \frac{\mbox{\rm d} #1}{\mbox{\rm d} #2}}
\newcommand{\dfinline}[2]{\mbox{\rm d} #1 / \mbox{\rm d} #2}
\newcommand{\diagMinline}[1]{{\mbox{\bf diag}\bigl[#1\bigr] } }
\newcommand{\dspfrac}[2]{\frac{\displaystyle #1}{\displaystyle #2} }
\newcommand{\enumlist}[1]{\begin{enumerate}#1\end{enumerate}}
\newcommand{\eqdef}{:=}
\newcommand{\ev}{\bm{e}}
\newcommand{\evt}{{\ev^\tp}}
\newcommand{\oldversion}[1]{}
\newcommand{\fracinline}[2]{#1/#2}
\newcommand{\goesto}{\rightarrow}
\newcommand{\hide}[1]{}	
\newcommand{\temphide}[1]{}	
\newcommand{\m}{\bm{m}}
\newcommand{\pf}[2]{\displaystyle \frac{\partial #1}{\partial #2}}
\newcommand{\pfinline}[2]{\partial #1 / \partial #2}
\newcommand{\pmu}[2]{\frac{\partial #1}{\partial  \mu_{#2}} }
\newcommand{\pr}{\protect}
\newcommand{\p}{\bm{p}}
\newcommand{\q}{\bm{q}}
\newcommand{\setd}[1]{\left\{ \; #1 \; \right\} }
\newcommand{\suchthat}{\colon}
\newcommand{\tv}{\bm{t}}
\newcommand{\tp}{\top}	
\newcommand{\tr}{\!^\top}
\newcommand{\uv}{\bm{u}}
\newcommand{\vv}{\bm{v}}
\newcommand{\xvh}{\hat{\x}}
\newcommand{\x}{\bm{x}}
\newcommand{\y}{\bm{y}}
\newcommand{\z}{\bm{z}}
\newfont{\gilfont}{cmsy10 scaled\magstep0}
\newcommand{\Naturals}{\mathbb{N}} 
\newcommand{\Reals}{\mathbb{R}} 
\newcommand{\citet}{\cite}
\newcommand{\citep}{\cite}
\newcommand{\citeauthor}{\cite}
\newcommand{\lc}{\MakeLowercase}
\newcommand{\rlc}{\lc{r}}
\newcommand{\DL}{\D_{\sf L}}
\newcommand{\DR}{\D_{\sf R}}
\newcommand{\EH}{{\sf E_H}}
\newcommand{\EA}{{\sf E_A}}
\newcommand{\iloc}{i}
\title{
On the Ordering of Spectral Radius Product $\rlc(\A) \, \rlc (\A\D)$ Versus $\rlc(\A^2\D)$ and Related Applications
} 
\author{Lee Altenberg\\ \hide{\sf altenber@hawaii.edu}
{\footnotesize Associate Editor of \emph{BioSystems}; Ronin Institute;} \footnotesize \sf{altenber@hawaii.edu}}
\date{} 
\begin{document}
\maketitle
\begin{abstract}
For a nonnegative matrix $\A$ and real diagonal matrix $\D$, two known inequalities on the spectral radius, $r(\A^2 \D^2) \geq r(\A\D)^2$ and $r(\A) \, r(\A \D^2)  \geq r(\A\D)^2$, leave open the question of what determines the order of $r(\A^2 \D^2)$ with respect to $r(\A) \, r(\A \D^2)$.  This is a special case of a broad class of problems that arise repeatedly in ecological and evolutionary dynamics.  Here, sufficient conditions are found on $\A$ that determine orders in either direction.  For a diagonally symmetrizable nonnegative matrix $\A$ with all positive eigenvalues and nonnegative $\D$, $r(\A^2 \D) \leq r(\A) \, r(\A\D)$.  The reverse holds if all of the eigenvalues of $\A$ are negative besides the Perron root.  This is a particular case of the more general result that $r(\A [(1-m) \, r(\B) \, \I + m \B] \D)$ is monotonic in $m$ when all non-Perron eigenvalues of $\A$ have the same sign --- decreasing for positive signs and increasing for negative signs, for symmetrizable nonnegative $\A$ and $\B$ that commute.  Commuting matrices include the Kronecker products $\A, \B \in \{ \ox_{i=1}^L \M_i^{t_i} \}$, $t_i \in \{0, 1, 2, \ldots\}$, which comprise a class of application for these results.  This machinery allows analysis of the sign of
$\pfinline{}{m_j}\, r(\{\ox_{i=1}^L[(1{-}m_i) \, r(\A_i) \I_i + m_i \A_i] \}\D)$.  The eigenvalue sign conditions also provide lower or upper bounds to the harmonic mean of the expected sojourn times of Markov chains.  These inequalities appear in the asymptotic growth rates of viral quasispecies, models for the evolution of dispersal in random environments, and the evolution of site-specific mutation rates over the entire genome. \footnote{To appear in \emph{SIAM Journal on Matrix Analysis and Applications (SIMAX)}}
\\
\ \\
Keywords: reversible Markov chain, positive definite, conditionally negative definite, mutation-selection balance, social mobility index 
\\ \ \\
MSC Subject Classification: 
\desclist{\small \setlength{\itemsep}{-2pt}
\item[15A42] Inequalities involving eigenvalues and eigenvectors,
\item[15B48] Positive matrices and their generalizations, 
\item[15A18] Eigenvalues, singular values, and eigenvectors.
}	
\end{abstract}
\section{Introduction}

In a recent paper, Cohen \citet{Cohen:2013:Cauchy} compares two inequalities on the spectral radius, $r$, of products involving a nonnegative square matrix $\A$ and a positive diagonal matrix $\D$:
\an{
r(\A^2 \D^2) &\geq r(\A\D)^2 \label{eq:A2D2},\\
r(\A) \, r(\A \D^2) & \geq r(\A\D)^2. \label{eq:AAD2}
}
Inequality \eqref{eq:A2D2} is obtained in \citet{Cohen:Friedland:Kato:and:Kelly:1982}, while inequality \eqref{eq:AAD2} is obtained in \citet{Cohen:2013:Cauchy}.  The relationship between two the left-hand side expressions in the inequalities is not determined.  Cohen notes that positive matrices $\A$ and real diagonal matrices $\D$ can be chosen to give either 
\an{\label{eq:Inequalities}
r(\A^2 \D^2)> r(\A) \, r(\A \D^2) \text{\ \ or \  \ } r(\A^2 \D^2) < r(\A) \, r(\A \D^2),
}
and asks whether conditions may be found that guarantee a direction to the inequality.

This seemingly narrow question is intimately related to a very broad class of problems that arise repeatedly in ecological and evolutionary dynamics, including the evolution of site-specific mutation rates over multiple loci \citep{Altenberg:2011:Mutation}, and dispersal in random environments \citep{Altenberg:2012:Dispersal}.  Specifically, it relates to the open question \citep{Altenberg:2004:Open,Altenberg:2012:Resolvent-Positive} of what conditions on $\A$, $\B$ and $\D$ determine the sign of $\dfinline{r([(1-m) \A + m \B]\D)}{m}$.  This more general question is pursued here in answering questions about \eqref {eq:Inequalities}.

The mathematical origin of the question begins with work of the late Sam Karlin on the effect of population subdivision on the maintenance of genetic diversity.  Karlin proved a very general and important theorem on the effect of mixing on growth, which has recently been independently rediscovered \citep [Theorem 3.1, Lemma 4.2]{Kirkland:Li:and:Schreiber:2006}:
\begin{theorem}[Karlin, \protect{\citep[Theorem 5.2, pp. 117--118,194--196]{Karlin:1982}}]  
\label{Theorem:Karlin5.2}  
 Let $\P$ be an irreducible stochastic matrix, and $\D$ be a positive diagonal matrix that is not a scalar multiple of $\I$.  Then the spectral radius of $[(1-m) \I + m \P]\D$ decreases strictly in $m \in [0,1]$.
 \end{theorem}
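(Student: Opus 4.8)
The plan is to reduce the statement to a sign condition on the derivative of the Perron root and then settle that sign by a log-convexity argument in an auxiliary parameter.

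First I would record that for $m\in(0,1]$ the matrix $\M(m):=[(1-m)\I+m\P]\D$ is nonnegative and irreducible (off the diagonal it has exactly the zero pattern of $\P\D$), so by Perron--Frobenius its spectral radius $r(m)$ is a simple eigenvalue with strictly positive right and left eigenvectors $\uv,\vv$, and $r(\cdot)$ is real-analytic on $(0,1)$ and continuous on $[0,1]$ with $r(0)=\max_i d_i$. Since a function continuous on $[0,1]$ with negative derivative on $(0,1)$ is strictly decreasing on $[0,1]$, it suffices to prove $r'(m)<0$ for $m\in(0,1)$. Using the simple-eigenvalue perturbation formula with $\M'(m)=(\P-\I)\D$, and then substituting $m\,\P\D\uv=r(m)\uv-(1-m)\D\uv$ (which comes from $\M(m)\uv=r(m)\uv$), the derivative collapses to
\[
r'(m)=\frac{1}{m}\left(r(m)-\frac{\vv^{\top}\D\,\uv}{\vv^{\top}\uv}\right),
\]
so everything reduces to the inequality $\vv^{\top}\D\,\uv>r(m)\,\vv^{\top}\uv$, where $\uv,\vv$ are the Perron vectors of $\B\D$ with $\B:=(1-m)\I+m\P$ an irreducible stochastic matrix.

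To prove that inequality I would introduce $h(t):=r(\B\D^{t})$ for real $t$, with $\D^{t}:=\mathrm{diag}(d_i^{t})$. Each entry $B_{ij}d_j^{t}$ of $\B\D^{t}$ has logarithm affine in $t$, hence is log-convex in $t$, so by the classical log-convexity theorem for Perron roots (Kingman) the map $t\mapsto\log h(t)$ is convex. Because $\B$ is stochastic, $h(0)=r(\B)=1$, while $h(1)=r(\B\D)=r(m)$; and differentiating $h$ and applying the left eigen-equation $\vv^{\top}\B\D=r(m)\,\vv^{\top}$ gives
\[
(\log h)'(1)=\frac{\vv^{\top}\B\D\,(\ln\D)\,\uv}{r(m)\,\vv^{\top}\uv}=\sum_i\mu_i\ln d_i,\qquad \mu_i:=\frac{u_iv_i}{\vv^{\top}\uv},
\]
with $\ln\D:=\mathrm{diag}(\ln d_i)$, $\mu_i>0$, $\sum_i\mu_i=1$. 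Convexity of $\log h$ yields the tangent-line bound $(\log h)'(1)\ge\log h(1)-\log h(0)=\log r(m)$, i.e. $\sum_i\mu_i\ln d_i\ge\log r(m)$; and since $\D$ is not a scalar matrix the $d_i$ are not all equal, so strict Jensen for the concave logarithm gives $\sum_i\mu_i\ln d_i<\log\bigl(\sum_i\mu_id_i\bigr)=\log\bigl(\vv^{\top}\D\,\uv/\vv^{\top}\uv\bigr)$. Chaining these shows $r(m)<\vv^{\top}\D\,\uv/\vv^{\top}\uv$, hence $r'(m)<0$, and the theorem follows.

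The one genuinely delicate point is this last inequality. It would be tempting to try to prove $r([(1-m)\I+m\P]\D)$ convex in $m$ and read off monotonicity that way, but the entries of that matrix are \emph{not} log-convex in $m$ (for $i\ne j$ an entry is proportional to $m$), so Kingman's theorem does not apply to the $m$-family. The device that makes it go through is to move the perturbation onto the diagonal exponent $t$, where log-convexity does hold, and then to harvest the needed estimate at $t=1$ from the first-order convexity inequality anchored at $t=0$ — the point where stochasticity pins $h(0)$ to $1$. Everything else is routine: continuity and Perron--Frobenius at the endpoints, and the perturbation calculus for the simple Perron root.
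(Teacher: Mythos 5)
Your argument is correct, and it is worth noting at the outset that the paper itself never proves this statement: Theorem \ref{Theorem:Karlin5.2} is quoted from Karlin (1982) as background, and the paper's own machinery (the symmetrizable canonical form, Rayleigh--Ritz, and the sum-of-squares representation behind Theorems \ref{Theorem:SpectralRadius}--\ref{Theorem:MainAB}) would recover it only under the extra hypothesis that $\P$ is reversible (symmetrizable), whereas your proof covers every irreducible stochastic $\P$, which is the actual strength of Karlin's theorem. Your route --- reduce by the simple-eigenvalue perturbation formula to $r'(m)=\frac{1}{m}\bigl(r(m)-\vv\tr\D\uv/\vv\tr\uv\bigr)$, then get $\sum_i\mu_i\ln d_i\ge\log r(\B\D)$ from Kingman log-convexity of $t\mapsto r(\B\D^t)$ anchored at $r(\B)=1$, and finish with strict Jensen using $\mu_i>0$ and $\D$ nonscalar --- checks out in every step (differentiability of $\log h$ at $t=1$ follows from simplicity of the Perron root of the irreducible $\B\D$, and continuity at the endpoints together with $r'<0$ on $(0,1)$ does give strict decrease on all of $[0,1]$). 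Your secant bound at $t=1$ is in fact the Friedland--Karlin inequality in disguise: applying $r(\A\D')\ge r(\A)\prod_i (d_i')^{\mu_i}$ with $\A=\B\D$ and $\D'=\D^{-1}$ gives exactly $\prod_i d_i^{\mu_i}\ge r(\B\D)$, so your proof is in the same family as Karlin's original argument and the Kirkland--Li--Schreiber and resolvent-positive treatments cited in the paper, though your convexity-in-the-exponent derivation is self-contained. The trade-off against the paper's sum-of-squares technique is clear: you gain full generality in $\P$ for the homotopy $(1-m)\I+m\P$, because the anchoring $h(0)=r(\B)=1$ exploits stochasticity and the identity-to-$\P$ form essentially; but for the same reason the argument does not extend to the forms $(1-m)\Q+m\P$ or $\A[(1-m)r_B\I+m\B]$ treated by Theorems \ref{Theorem:Main} and \ref{Theorem:MainAB}, nor can it detect the reversal of monotonicity under condition C2, which is precisely what the paper's eigenvalue-sign machinery is built to capture.
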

 
The diagonal matrix $\D$ represents heterogeneous growth rates in different population subdivisions, and $\P$ represents dispersal between subdivisions.  The parameter $m$ represents the rate of mixing, which scales the transition rates $P_{ij}$ between different subdivisions.  The form $(1-m) \I + m \P$ appears in numerous models in evolution and ecology where transitions of state are caused by single events.  Models in which multiple events occur, or where there is nonuniform scaling of the transition probabilities, do not fit this form.  Often they are of the form $(1-m) \P_1 + m \P_2$.  Characterizing the relationships that make $r([(1-m) \P_1 + m \P_2]\D)$ monotonically increasing or decreasing in $m$ has the potential to solve the behavior of many of these ecological and evolutionary models.  Here we examine a partial characterization of these relationships.

The characterization relies on another insight from Karlin in the same paper \citep[Theorem 5.1, pp. 114--116, 197--198]{Karlin:1982}, that \emph{symmetrizable} stochastic matrices are analytically tractable.

\begin{definition}[Symmetrizable Matrix \citep{Johnson:1977:Hadamard}]
A square matrix $\A$ is called {\em diagonally symmetrizable} (for brevity, \emph{symmetrizable}) to a symmetric matrix $\Sm$ if it can be represented as a product $\A = \DL \Sm \DR$, where $\DL$ and $\DR$ are positive diagonal matrices. 
\end{definition}

\begin{theorem}[Karlin, \protect{\citep[Theorem 5.1, pp. 114--116, 197--198]{Karlin:1982}}]  
\label{Theorem:Karlin5.1}  
Consider a family $\Fc$ of stochastic matrices that commute and are simultaneously symmetrizable to positive definite matrices, i.e.:
\an{
\label{eq:KarlinMhMk}
\Fc \eqdef \{ \M_i = \DL \Sm_i \DR \colon \M_h \M_k = \M_k \M_h \},
}
where $\DL$ and $\DR$ are positive diagonal matrices, and each $\Sm_h$ is a positive definite symmetric nonnegative matrix.   Let $\D$ be a positive diagonal matrix.  Then for each $\M_h, \M_k \in \Fc$:
$\label{eq:Theorem5.1}
r(\M_h \M_k \D) \leq r(\M_k \D).
$
\end{theorem}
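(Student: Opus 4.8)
The plan is to conjugate the whole problem into one about symmetric positive definite matrices sharing an eigenbasis only among themselves, and then to express both spectral radii as squared spectral norms of products whose factors may be reordered because their square roots commute.

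First I would exploit that every $\M_i \in \Fc$ has the \emph{same} symmetrizers $\DL, \DR$, so a single positive diagonal matrix $\E \eqdef (\DR\DL^{-1})^{1/2}$ symmetrizes all of them at once: setting $\G \eqdef (\DL\DR)^{1/2}$, one checks $\Mh_i \eqdef \E \M_i \E^{-1} = \G \Sm_i \G$, which is symmetric and, being congruent to the positive definite $\Sm_i$, is itself positive definite. The family $\{\Mh_i\}$ inherits mutual commutativity, and since $\E$ and $\D$ are both diagonal we have $\E \D \E^{-1} = \D$, so $r(\M_h\M_k\D) = r(\Mh_h\Mh_k\D)$ and $r(\M_k\D) = r(\Mh_k\D)$. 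Each $\M_h$ being stochastic gives $r(\Mh_h) = r(\M_h) = 1$. Hence it suffices to prove $r(\Mh_h\Mh_k\D) \le r(\Mh_k\D)$ for commuting symmetric positive definite $\Mh_h, \Mh_k$ with $r(\Mh_h) \le 1$ and positive diagonal $\D$.

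Since $\Mh_h$ and $\Mh_k$ are symmetric and commute, they are simultaneously orthogonally diagonalizable, so their positive definite square roots $\Mh_h^{1/2}$ and $\Mh_k^{1/2}$ commute with one another and with $\Mh_h, \Mh_k$. Put $\C \eqdef \Mh_k^{1/2}\Mh_h^{1/2}\D^{1/2}$ and $\C_0 \eqdef \Mh_k^{1/2}\D^{1/2}$. A short computation using these commutations gives $\C^\tp\C = \D^{1/2}\Mh_h\Mh_k\D^{1/2}$ and $\C_0^\tp\C_0 = \D^{1/2}\Mh_k\D^{1/2}$, and conjugating by $\D^{1/2}$ shows these are similar to $\Mh_h\Mh_k\D$ and $\Mh_k\D$ respectively. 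Therefore $r(\Mh_h\Mh_k\D) = \lambda_{\max}(\C^\tp\C) = \|\C\|_2^2$ and $r(\Mh_k\D) = \|\C_0\|_2^2$, where $\|\cdot\|_2$ is the spectral norm. Because the square roots commute, $\C = \Mh_h^{1/2}\Mh_k^{1/2}\D^{1/2} = \Mh_h^{1/2}\C_0$, so $\|\C\|_2 \le \|\Mh_h^{1/2}\|_2\,\|\C_0\|_2 = \sqrt{r(\Mh_h)}\;\|\C_0\|_2 \le \|\C_0\|_2$; squaring yields the claim.

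I expect the step needing the most care to be the reduction to the symmetric case: one must check that a single conjugating matrix works for the entire commuting family, that positive definiteness survives (this uses \emph{congruence}, $\G\Sm_i\G$, not mere similarity), that the auxiliary diagonal $\D$ is untouched by the conjugation, and that stochasticity is precisely what supplies $r(\Mh_h) \le 1$ (the hypothesis $r(\Mh_h)\le 1$, not positive definiteness, is doing the real work). Once the problem is symmetric, the genuinely new idea — factoring $\Mh_h\Mh_k\D$, up to similarity, as $\C^\tp\C$ with $\C = \Mh_h^{1/2}\C_0$ — reduces the inequality to submultiplicativity of the spectral norm, and the supporting identities are routine.
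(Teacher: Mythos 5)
Your proof is correct, and it takes a genuinely different route from the paper. The paper treats this statement as Karlin's and obtains it dynamically: it is a corollary of the monotonicity result (Theorem \ref{Theorem:Departure}, generalized here to Theorem \ref{Theorem:Main}), whose proof runs through the simultaneous canonical form for commuting symmetrizable matrices, the Rayleigh--Ritz variational formula, and a sum-of-squares representation of $r(\M(m)\D)$ and its derivative along the homotopy $\M(m)=\M_k[(1-m)\I+m\M_h]$; the inequality $r(\M_h\M_k\D)\le r(\M_k\D)$ is then read off by comparing $m=0$ with $m=1$. Your argument is static and self-contained: one conjugation by $\E=(\DR\DL^{-1})^{1/2}$ symmetrizes the whole family at once (this is exactly the computation in Lemma \ref{Lemma:1Mutation}, equation \eqref{eq:Smh}), leaves $\D$ fixed, and reduces the claim to commuting symmetric positive definite matrices, where the similarity $r(\Mh_h\Mh_k\D)=\lambda_{\max}(\C\tr\C)$ with $\C=\Mh_h^{1/2}\Mh_k^{1/2}\D^{1/2}$ turns the inequality into submultiplicativity of the spectral norm together with $\|\Mh_h^{1/2}\|_2^2=r(\M_h)=1$; all steps check out, including the commutation of the square roots and the identities $\C\tr\C=\D^{1/2}\Mh_h\Mh_k\D^{1/2}$, $\C_0\tr\C_0=\D^{1/2}\Mh_k\D^{1/2}$. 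What each approach buys: yours is shorter, needs no irreducibility, no Perron vectors, and no differentiability machinery, and it delivers exactly the non-strict inequality claimed; the paper's machinery costs more but yields strict monotonicity in $m$ for nonscalar $\D$ and irreducible matrices, the reverse inequality under condition C2, and the observation (noted after Theorem \ref{Theorem:Departure}) that only the positivity of $\M_k$'s spectrum matters while the eigenvalues of $\M_h$ are irrelevant. One small caveat on your closing remark: as written your proof does use the positive definiteness of $\Mh_h$ (you take its square root), so stochasticity is not quite the only thing $\M_h$ contributes, although a minor variant --- bounding $\|(\Mh_k^{1/2}\D^{1/2})\tr\,\Mh_h\,(\Mh_k^{1/2}\D^{1/2})\|_2\le\|\Mh_h\|_2\,\|\Mh_k^{1/2}\D^{1/2}\|_2^2$ --- would indeed need only symmetry of $\Mh_h$ and $r(\M_h)\le 1$, recovering the paper's observation.
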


Conditions for the inequality $r(\A^2 \D)\leq r(\A) \, r(\A \D)$ are readily obtained from Theorem \ref {Theorem:Karlin5.1}.  It is applied to \eqref{eq:Inequalities} by constraining $\D^2$ to be a positive diagonal matrix and substituting $\M_h = \M_k = \A$, and $r(\A)=1$, which yields $r(\A^2 \D^2) \leq r(\A) \, r(\A \D^2)$.

Theorem \ref{Theorem:Karlin5.1} is extended in \cite{Altenberg:2012:Dispersal} to conditions that make the spectral radius monotonic over a homotopy from $\M_k$ to $\M_h \M_k$. This monotonicity, either increasing or decreasing, establishes inequalities in each direction between $r(\A^2 \D)$ and $r(\A) \, r(\A \D)$.

\begin{theorem}[\pr{From \cite[Theorem 33]{Altenberg:2012:Dispersal}}]\label{Theorem:Departure}
Let $\P$ and $\Q$ be transition matrices of reversible ergodic Markov chains that commute with each other.  Let $\D \neq c \, \I$ be a positive diagonal matrix, and define
\ab{
\M(m) \eqdef \P [(1{-}m)\I + m\Q], \qquad m \in [0, 1].
}
If all eigenvalues of $\P$ are positive, then $\dfinline{r(\M(m) \D)}{m}< 0$.
If all eigenvalues of $\P$ other than $\lambda_1(\P)=1$ are negative, then $\dfinline{r(\M(m) \D)}{m} > 0.$
\end{theorem}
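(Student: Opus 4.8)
The plan is to reduce the problem to a symmetric eigenvalue problem, differentiate the Perron root by first-order perturbation theory, and read off the sign of the derivative from the spectrum of a single matrix built from the common eigenbasis of $\P$ and $\Q$. First I would record that $\P$ and $\Q$ share a stationary distribution $\pi$: if $\pi$ is the (unique, since $\P$ is ergodic) stationary probability vector of $\P$, then $\pi\Q=(\pi\P)\Q=\pi\Q\P=(\pi\Q)\P$, so $\pi\Q$ is a left $1$-eigenvector of $\P$, hence a scalar multiple of $\pi$, and the multiple is $1$ since $\Q$ is stochastic. Therefore both $\P$ and $\Q$ are symmetrizable by the \emph{same} diagonal matrix $\Diags{\pi}^{1/2}$: the matrices $\Sm_\P:=\Diags{\pi}^{1/2}\,\P\,\Diags{\pi}^{-1/2}$ and $\Sm_\Q:=\Diags{\pi}^{1/2}\,\Q\,\Diags{\pi}^{-1/2}$ are symmetric, nonnegative, commute (because $\P,\Q$ do), have the same spectra as $\P$ and $\Q$, and hence admit a common orthonormal eigenbasis $w_1,\dots,w_n$ with $\Sm_\P w_k=\lambda_k w_k$, $\Sm_\Q w_k=\mu_k w_k$, where $\lambda_1=\mu_1=1$, $w_1\propto\Diags{\pi}^{1/2}\1$, and (ergodicity of $\Q$) $\mu_k<1$ for $k\ge 2$.

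Next I would symmetrize the entire product. Conjugating by $\Diags{\pi}^{1/2}$ and using that diagonal matrices commute (so $\Diags{\pi}^{1/2}\D\,\Diags{\pi}^{-1/2}=\D$) gives $r(\M(m)\D)=r(\N(m)\D)$ with $\N(m):=\Sm_\P\big[(1{-}m)\I+m\Sm_\Q\big]$, which is symmetric (a product of commuting symmetric matrices), nonnegative, and has eigenvalues $\lambda_k[(1{-}m)+m\mu_k]$ with eigenvectors $w_k$. Conjugating once more by $\D^{1/2}$ gives $r(m):=r(\M(m)\D)=\lambda_{\max}(\B(m))$ with $\B(m):=\D^{1/2}\N(m)\D^{1/2}$ symmetric and nonnegative (so its spectral radius is its largest eigenvalue). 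For $m\in[0,1)$ one has $\N(m)\D\ge(1{-}m)\,\Sm_\P\D$ entrywise, and the latter is irreducible, so $r(m)$ is a simple, analytic eigenvalue with unit eigenvector $\xi(m)$. Since $\B'(m)=\D^{1/2}\Sm_\P(\Sm_\Q-\I)\D^{1/2}$, the Hellmann--Feynman formula yields
\[
\frac{d}{dm}\,r(m)=\xi(m)^{\top}\B'(m)\,\xi(m)=\eta^{\top}\Sm_\P(\Sm_\Q-\I)\,\eta,\qquad\text{where }\eta:=\D^{1/2}\xi(m).
\]

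The sign is then immediate. The symmetric matrix $\Sm_\P(\Sm_\Q-\I)$ commutes with both $\Sm_\P$ and $\Sm_\Q$ and has eigenvalues $\lambda_k(\mu_k-1)$ in the basis $w_1,\dots,w_n$, with $\mu_1-1=0$ and $\mu_k-1<0$ for $k\ge2$. Hence if all eigenvalues of $\P$ are positive then every $\lambda_k(\mu_k-1)\le0$, so $\Sm_\P(\Sm_\Q-\I)$ is negative semidefinite with kernel exactly $\mathrm{span}\{w_1\}$ and $\frac{d}{dm}r(m)\le0$; if instead $\lambda_1=1$ and $\lambda_k<0$ for $k\ge2$ then every $\lambda_k(\mu_k-1)\ge0$, so $\Sm_\P(\Sm_\Q-\I)$ is positive semidefinite with the same kernel and $\frac{d}{dm}r(m)\ge0$.

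The one delicate point — and what I expect to be the main work — is strictness, i.e.\ showing $\eta\notin\mathrm{span}\{w_1\}$, which is exactly where $\D\neq c\,\I$ enters. If $\eta=\D^{1/2}\xi(m)$ were proportional to $w_1\propto\Diags{\pi}^{1/2}\1$, then $\xi(m)\propto\D^{-1/2}\Diags{\pi}^{1/2}\1$; substituting into $\B(m)\xi(m)=r(m)\xi(m)$ and using that $\Sm_\P$, $\Sm_\Q$, and hence $\N(m)$, all fix $\Diags{\pi}^{1/2}\1$ collapses the equation to $\D\,\Diags{\pi}^{1/2}\1=r(m)\,\Diags{\pi}^{1/2}\1$; since $\Diags{\pi}^{1/2}\1$ is entrywise positive this forces $\D=r(m)\I$, contradicting the hypothesis. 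So $\frac{d}{dm}r(m)<0$ in the first case and $>0$ in the second for all $m\in[0,1)$; the endpoint $m=1$ follows either from continuity of $r(\cdot)$ together with its convexity on $[0,1]$ (immediate from writing $r(m)=\max_{\eta\neq0}\big[(1{-}m)\,\eta^{\top}\Sm_\P\eta+m\,\eta^{\top}\Sm_\P\Sm_\Q\eta\big]/\big(\eta^{\top}\D^{-1}\eta\big)$ as a pointwise maximum of functions affine in $m$), or, when $\P\Q$ is irreducible, from the same perturbation argument applied at $m=1$. Beyond the strictness bookkeeping, the only thing needing care is confirming that the irreducibility and eigenvalue-simplicity required for analytic perturbation theory genuinely hold throughout $[0,1)$; the sign computation itself costs nothing once the common symmetrization is in place.
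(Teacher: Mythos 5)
Your proposal is correct and follows essentially the same route the paper takes (for its generalization, Theorem \ref{Theorem:Main}): symmetrize $\P$ and $\Q$ by the common diagonal built from the shared Perron/stationary data, expand the derivative of the Rayleigh quotient in the common orthonormal eigenbasis to get a sum of the form $\sum_k \lambda_k(\mu_k-1)\,\langle\eta,w_k\rangle^2$, and use $\D\neq c\,\I$ to rule out $\eta\in\mathrm{span}\{w_1\}$, exactly as in the paper's nonscalar-$\D$ contradiction argument. The only cosmetic difference is your direct use of the stationary distribution $\pi$ and the Hellmann--Feynman formula in place of the paper's canonical form $\E\K\Lam\K\tr\E^{-1}$ and general eigenvalue-derivative identity, plus your (harmless, and easily discharged since commuting ergodic chains have primitive product) hedging at the endpoint $m=1$.
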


The condition in Theorem \ref{Theorem:Karlin5.1} that the matrices be symmetrizable is shown in \citet[Lemma 2]{Altenberg:2011:Mutation} to be equivalent to their being the transition matrices of reversible Markov chains.  Theorem \ref{Theorem:Departure} yields Theorem \ref{Theorem:Karlin5.1} by letting $\M_k = \P$ and $\M_h = \Q$.  Then $\M(0) = \M_k$ and $\M(1) = \M_h \M_k = \M_k \M_h$.  The hypothesis that $\M_k$ is symmetrizable to a positive definite matrix means that $\M_k$ has all positive eigenvalues, so $\dfinline{r(\M(m)\D)}{m} \leq 0$ for any positive diagonal $\D$, and thus $r(\M_h\M_k\D)=r(\M_k\M_h\D) \leq r(\M_k\D)$.  Note that the eigenvalues of $\M_h$ are irrelevant to this inequality.

For the inequality in the reverse direction, $r(\A^2 \D) \geq r(\A) \, r(\A \D)$, let all the eigenvalues of $\A$ other than $r(\A) = \lambda_1(\A)=1$ be negative and substitute $\A = \P = \Q$, so $\M(0) = \A$ and $\M(1) = \A^2$.  The result $\dfinline{r(\M(m) \D)}{m} \geq 0$ yields $r(\A^2 \D) \geq r(\A) \, r(\A\D)$ for such a stochastic symmetrizable matrix $\A$.  

In the present paper, Theorem \ref{Theorem:Departure} is generalized to all symmetrizable irreducible nonnegative matrices.  

\section{Results}

The goal is to provide conditions for which the spectral radius of $\A [(1-m) \, r_B \, \I + m \B]\D$, or more generally of $[(1-m) \, \A + m \B]\D$, is monotonic in $m$.  We proceed as follows:  first, $\A$ and $\B$ are constrained to commute and be symmetrizable, which allows them to be simultaneously represented by the canonical form \eqref{eq:CanonicalForm};  second, this form is used to show that its spectral radius can be represented as a sum of squares; finally, the derivative of the spectral radius is represented as a sum of squares, and this is utilized to give conditions that determine its sign.

\subsection{Preparatory Results}
The following notational conventions are used.  The elements of a matrix $\A$ are $[\A]_{ij}\equiv A_{ij}$, the columns are $[\A]_i$, and the rows are $[\A]^i$, and $\A\tr$ is its transpose.  A diagonal matrix with elements of a vector $\x$ along the diagonal is $\D_\x = \diagMinline{\x}$.  Diagonal matrix $\D$ is called \emph{nonscalar} when $\D \neq c \, \I$ for any $c \in \Reals$.  The vector with $1$ in position $i$ and zeros elsewhere is $\ev_i$. 

We review the properties of irreducible nonnegative $n \times n$ matrices.  When $\A$ is irreducible then for each $(i,j)$ there is some $t \in \Naturals$ such that $[\A^t]_{ij} > 0$.  The eigenvalues of $\A$ are represented as $\lambda_i(\A)$, $i=1, \ldots, n$, and the spectral radius by $r(\A) \eqdef \max_{i=1, \ldots, n} | \lambda_i|$.  We recall from Perron-Frobenius theory that $r(\A)$ is a simple eigenvalue of $\A$, called the \emph{Perron root}, designated here as $r_A \equiv r(\A) = \lambda_1(\A)$.  The \emph{non-Perron} eigenvalues are $\lambda_{Ai} \equiv \lambda_i(\A)$, $i = 2, \ldots, n$.  Let $\vv(\A)$ and $\uv(\A)\tr$ be the right and left \emph{Perron vectors} of $\A$, the eigenvectors associated with the Perron root, normalized so that $\evt \vv(\A) = \uv(\A)\tr \vv(\A) = 1$, where $\ev$ is the vector of ones.  Since $\A$ is irreducible, from Perron-Frobenius theory, $\vv(\A)$ and $\uv(\A)\tr$ are strictly positive and unique.    

The following canonical representation of symmetrizable matrices is used throughout.  It arises for the special case of transition matrices of reversible Markov chains \citep [p. 33]{Keilson:1979}.

\begin{lemma} [\pr{\citep[Lemma 1 (15)]{Altenberg:2011:Mutation}}]\label{Lemma:1Mutation}
Let $\A=\DL \Sm \DR$, where $\DL$ and $\DR$ are positive diagonal matrices, and $\S$ is symmetric.  Then there exists symmetric $\Smh$ with the same eigenvalues as $\A$, which are all real, where $\A= \E \Smh \E^{-1}$, and $\E=\DL^{1/2} \DR^{-1/2}$.
\end{lemma}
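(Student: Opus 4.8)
The statement to prove is Lemma~\ref{Lemma:1Mutation}: if $\A = \DL \Sm \DR$ with $\DL, \DR$ positive diagonal and $\Sm$ symmetric, then $\A$ is similar (via the positive diagonal matrix $\E = \DL^{1/2}\DR^{-1/2}$) to a symmetric matrix $\Smh$, and hence $\A$ has all real eigenvalues. The key idea is that conjugating $\A$ by an appropriate diagonal matrix should "balance" the left and right diagonal factors so that what remains is manifestly symmetric. Concretely, the plan is to compute $\Smh := \E^{-1}\A\,\E = \DL^{-1/2}\DR^{1/2}\,(\DL \Sm \DR)\,\DL^{1/2}\DR^{-1/2}$ and then regroup the diagonal factors, using the fact that all diagonal matrices commute, to collect a factor of $\DL^{1/2}\DR^{1/2}$ on the left and an identical factor on the right of $\Sm$.

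First I would carry out the regrouping explicitly: since $\DL^{-1/2}\DR^{1/2}\DL = \DL^{1/2}\DR^{1/2}$ (all factors diagonal, hence commuting) and $\DR\,\DL^{1/2}\DR^{-1/2} = \DL^{1/2}\DR^{1/2}$, we get $\Smh = (\DL^{1/2}\DR^{1/2})\,\Sm\,(\DL^{1/2}\DR^{1/2})$. Writing $\F := \DL^{1/2}\DR^{1/2}$, which is a positive diagonal matrix and therefore symmetric, we have $\Smh = \F \Sm \F$, so $\Smh\tr = \F\tr \Sm\tr \F\tr = \F \Sm \F = \Smh$; thus $\Smh$ is symmetric. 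Since $\Smh = \E^{-1}\A\E$ is similar to $\A$, it has exactly the same eigenvalues as $\A$, and because a real symmetric matrix has only real eigenvalues, all eigenvalues of $\A$ are real. That completes the argument.

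There is essentially no hard part here — the proof is a short diagonal-matrix manipulation — but the one point requiring a word of care is that $\DL$ and $\DR$ are positive diagonal, so their square roots $\DL^{1/2}$, $\DR^{1/2}$ are well-defined real positive diagonal matrices, and $\E = \DL^{1/2}\DR^{-1/2}$ is invertible; without positivity the similarity transformation and the square roots would not make sense. One might also remark that $\Smh$ need not be nonnegative even when $\A$ is, and that the construction is exactly the standard symmetrization used for reversible Markov chains (the reference to \citep[p.~33]{Keilson:1979} in the surrounding text), where $\DL\DR = \D_{\piv}$ is the stationary distribution; but for the lemma as stated nothing beyond the elementary regrouping is needed.
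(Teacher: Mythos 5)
Your proposal is correct and follows essentially the same route as the paper: conjugate $\A$ by $\E^{-1}$, regroup the commuting diagonal factors to obtain $\Smh = \DL^{1/2}\DR^{1/2}\,\Sm\,\DL^{1/2}\DR^{1/2}$, and conclude symmetry, equality of eigenvalues by similarity, and realness from symmetry. No gaps; the extra remarks on positivity of the square roots and the Markov-chain interpretation are consistent with the paper's framing.
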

\begin{proof}
Direct substitution gives 
\an{
\A=\DL \Sm \DR &= \E \Smh \E^{-1} = \DL^{1/2} \DR^{-1/2} \Smh \DL^{-1/2} \DR^{1/2} \iff \notag\\
\Smh &=  \DL^{-1/2} \DR^{1/2} \DL \Sm \DR \DL^{1/2} \DR^{-1/2} 
=  \DL^{1/2} \DR^{1/2} \Sm  \DL^{1/2} \DR^{1/2} , \label{eq:Smh}
}
which is symmetric \citep[p. 82]{Fiedler:Johnson:Markham:and:Neumann:1985}.  $\Smh$ and $\A$ share the same eigenvalues since $\A \x =\E \Smh \E^{-1} \x = \lambda \x$ if and only if $\Smh \E^{-1} \x = \lambda \E^{-1} \x$.  Since $\Smh$ is symmetric, $\lambda$ must be real \citet[2.5.14 Corollary]{Horn:and:Johnson:1985}.
\end{proof}
\begin{lemma}[Canonical Form for Symmetrizable Matrices \pr{\citep[Lemma 1 (18)]{Altenberg:2011:Mutation}}]
\label{Lemma:Canonical} 
A symmetrizable matrix $\A = \DL \Sm \DR$, where $\Sm$ is symmetric and $\DL$ and $\DR$ are positive diagonal matrices, can always be put into a canonical form
\an{
\label{eq:CanonicalForm}
\A = \DL \Sm \DR = \E \K \Lam \K\tr \E^{-1},
}
where $\E=\DL^{1/2} \DR^{-1/2}$ is a positive diagonal matrix, $\K$ is an orthogonal matrix, $\Lam$ is a diagonal matrix of the eigenvalues of $\A$, the columns of $\E \K$ are right eigenvectors of $\A$, and the rows of $\K\tr \E^{-1}$ are left eigenvectors of $\A$. 
\end{lemma}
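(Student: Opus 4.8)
The plan is to build directly on Lemma \ref{Lemma:1Mutation}, which already supplies the essential structural reduction. That lemma produces a real symmetric matrix $\Smh = \DL^{1/2}\DR^{1/2} \Sm \DL^{1/2}\DR^{1/2}$ with the same (necessarily real) eigenvalues as $\A$, together with the similarity $\A = \E \Smh \E^{-1}$, where $\E = \DL^{1/2}\DR^{-1/2}$ is a positive diagonal matrix. Hence all that remains is to diagonalize the symmetric matrix $\Smh$ and substitute.

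First I would apply the real spectral theorem to $\Smh$: since $\Smh$ is real symmetric, there exist an orthogonal matrix $\K$ and a diagonal matrix $\Lam$, whose diagonal entries are the eigenvalues of $\Smh$ and therefore (by Lemma \ref{Lemma:1Mutation}) the eigenvalues of $\A$, such that $\Smh = \K \Lam \K\tr$. Substituting this into $\A = \E \Smh \E^{-1}$ gives $\A = \E \K \Lam \K\tr \E^{-1}$, which is the canonical form \eqref{eq:CanonicalForm}; note that $\E$ is positive diagonal as a product of positive diagonal matrices, and that $\E\K$, $\K\tr\E^{-1}$ are the factors appearing in the statement.

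Next I would verify the eigenvector assertions by direct multiplication. Since $\K$ is orthogonal, $(\E\K)^{-1} = \K^{-1}\E^{-1} = \K\tr \E^{-1}$, so \eqref{eq:CanonicalForm} is a genuine eigendecomposition $\A = (\E\K)\,\Lam\,(\E\K)^{-1}$. Right-multiplying $\A = \E\K\Lam\K\tr\E^{-1}$ by $\E\K$ yields $\A (\E\K) = \E\K\Lam$, so column $i$ of $\E\K$ is a right eigenvector of $\A$ for eigenvalue $\Lam_{ii}$; left-multiplying by $\K\tr\E^{-1}$ yields $(\K\tr\E^{-1})\A = \Lam (\K\tr\E^{-1})$, so row $i$ of $\K\tr\E^{-1}$ is a left eigenvector of $\A$ for the same eigenvalue.

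There is essentially no serious obstacle here: the mathematical content is carried entirely by Lemma \ref{Lemma:1Mutation} together with the spectral theorem for real symmetric matrices. The only points requiring a moment's care are the bookkeeping observation that $\K\tr\E^{-1}$ is literally $(\E\K)^{-1}$, so the left and right eigenvector systems are mutually biorthogonal, and the remark that a similarity transformation preserves the spectrum, which is what licenses describing $\Lam$ as the diagonal matrix of eigenvalues of $\A$ rather than of $\Smh$.
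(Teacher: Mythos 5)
Your proposal is correct and follows essentially the same route as the paper's proof: invoke Lemma \ref{Lemma:1Mutation} for $\A=\E\Smh\E^{-1}$, diagonalize the symmetric $\Smh=\K\Lam\K\tr$ via the spectral theorem (the paper cites the symmetric Jordan form in Horn and Johnson), substitute, and verify the eigenvector claims by direct multiplication. The only cosmetic difference is that you verify $\A(\E\K)=\E\K\Lam$ at the matrix level while the paper checks it column by column, which is the same computation.
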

\begin{proof}
Symmetric $\Smh$ from \eqref{eq:Smh} in Lemma \ref{Lemma:1Mutation} has a symmetric Jordan canonical form $\Smh = \K \Lam \K\tr$ where $\K$ is an orthogonal matrix and $\Lam$ is a matrix of the eigenvalues of $\Smh$ \citep[4.4.7 Theorem]{Horn:and:Johnson:1985}, which by construction are also the eigenvalues of $\A$.  Hence $\A = \DL \Sm \DR = \E\Smh\E^{-1} = \E \K \Lam \K\tr \E^{-1}$.  Let $[\E\K]_i$ be the $i$th column of $\E\K$.  Then 
\ab{
\A[\E\K]_i 
&
= \E \K \Lam \K\tr \E^{-1}[\E\K]_i 
= \E \K \Lam \K\tr [\K]_i 
= \E \K \Lam \ev_i 
=  \lambda_i \E \K \ev_i 
=  \lambda_i [ \E \K]_i 
}
hence $[ \E \K]_i $ is a right eigenvector of $\A$.  The analogous derivation shows the rows of $\K\tr \E^{-1}$ to be left eigenvectors of $\A$.
\end{proof}

\begin {lemma}[Canonical Form for Commuting Symmetrizable $\A$ and $\B$]\label {Lemma:CanonicalFormAB}
Let $\A$ and $\B$ be $n \times n$ symmetrizable irreducible nonnegative matrices that commute with each other.  Then $\A$ and $\B$ can be decomposed as
\an{
\A &= \D_{\vv}^{1/2} \D_{\uv}^{-1/2}\K \Lam_A \K\tr \D_{\vv}^{-1/2} \D_{\uv}^{1/2}, \label{eq:CanonA}\\
\B & = \D_{\vv}^{1/2} \D_{\uv}^{-1/2}\K \Lam_B \K\tr \D_{\vv}^{-1/2} \D_{\uv}^{1/2}, \label{eq:CanonB}
}
where $\vv \equiv \vv(\A)=\vv(\B)$, $\uv \equiv \uv(\A)=\uv(\B)$,  $\K$ is an orthogonal matrix, $[\K]_1 =  \D_{\vv}^{1/2} \D_{\uv}^{1/2} \ev$, and $\Lam_A$ and $\Lam_B$ are diagonal matrices of the eigenvalues of $\A$ and $\B$, respectively.
\end{lemma}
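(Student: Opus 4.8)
The plan is to produce the common diagonal $\E$ and common orthogonal $\K$ in three stages. First I would show that commuting irreducible nonnegative matrices must share their Perron eigenvectors. Since $\A\B=\B\A$ and $r_A=\lambda_1(\A)$ is a simple eigenvalue with right Perron vector $\vv(\A)>0$, the vector $\B\,\vv(\A)$ obeys $\A(\B\vv(\A))=\B(\A\vv(\A))=r_A\,\B\vv(\A)$, so simplicity forces $\B\vv(\A)=\mu\,\vv(\A)$ for a scalar $\mu$; nonnegativity and irreducibility of $\B$ together with $\vv(\A)>0$ give $\mu>0$, and Perron--Frobenius then identifies $\mu=r_B$ and $\vv(\A)$ as a positive multiple of $\vv(\B)$. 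Under the stated normalizations this yields $\vv\eqdef\vv(\A)=\vv(\B)$, and the same argument applied to $\A\tr,\B\tr$ (also nonnegative, irreducible, and commuting) gives $\uv\eqdef\uv(\A)=\uv(\B)$.

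Next I would pin down the symmetrizing conjugation. Applying Lemma~\ref{Lemma:Canonical} to $\A$ gives $\A=\E_A\K_A\Lam_A\K_A\tr\E_A^{-1}$ with $\E_A$ positive diagonal, the columns of $\E_A\K_A$ right eigenvectors and the rows of $\K_A\tr\E_A^{-1}$ left eigenvectors; ordering the eigenvalues with $r_A$ first, the Perron column and row read $\E_A[\K_A]_1=\alpha\,\vv$ and $([\K_A]_1)\tr\E_A^{-1}=\beta\,\uv\tr$ for scalars $\alpha,\beta$. Writing these out componentwise and multiplying gives $([\K_A]_1)_i^2=\alpha\beta\,v_iu_i$, while dividing gives $(\E_A)_{ii}^2=(\alpha/\beta)\,v_i/u_i$; since $[\K_A]_1$ is a unit vector and $\uv\tr\vv=1$ we get $\alpha\beta=1$, so $\E_A$ is a scalar multiple of $\D_{\vv}^{1/2}\D_{\uv}^{-1/2}$, and that scalar cancels in $\E_A(\cdot)\E_A^{-1}$. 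Hence $\A=\E\K_A\Lam_A\K_A\tr\E^{-1}$ with $\E\eqdef\D_{\vv}^{1/2}\D_{\uv}^{-1/2}$, and the identical argument for $\B$ — using the \emph{same} $\vv,\uv$ from the first stage — yields $\B=\E\K_B\Lam_B\K_B\tr\E^{-1}$ with the same $\E$. Consequently $\Smh_A\eqdef\E^{-1}\A\E$ and $\Smh_B\eqdef\E^{-1}\B\E$ are symmetric and commute, since $\Smh_A\Smh_B=\E^{-1}\A\B\E=\E^{-1}\B\A\E=\Smh_B\Smh_A$.

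Finally I would invoke simultaneous orthogonal diagonalization of the commuting real symmetric pair $\Smh_A,\Smh_B$ \cite{Horn:and:Johnson:1985}: there is an orthogonal $\K$ with $\K\tr\Smh_A\K=\Lam_A$ and $\K\tr\Smh_B\K=\Lam_B$ both diagonal, which rearranges into \eqref{eq:CanonA}--\eqref{eq:CanonB}. To obtain $[\K]_1=\D_{\vv}^{1/2}\D_{\uv}^{1/2}\ev$, note that the positive vector $\E^{-1}\vv$ satisfies $\Smh_A(\E^{-1}\vv)=\E^{-1}\A\vv=r_A\,\E^{-1}\vv$ and likewise $\Smh_B(\E^{-1}\vv)=r_B\,\E^{-1}\vv$, so it is a common eigenvector; since $\|\E^{-1}\vv\|^2=\sum_i u_iv_i=1$ it is already a unit vector, so it may be taken as the first column of $\K$, and then $(\E^{-1}\vv)_i=\sqrt{u_iv_i}$ gives $[\K]_1=\D_{\vv}^{1/2}\D_{\uv}^{1/2}\ev$ while placing $r_A,r_B$ in the leading diagonal entries of $\Lam_A,\Lam_B$ (indeed $\E[\K]_1=\vv$, the shared Perron vector).

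The hard part will be the second stage: showing that the same diagonal matrix $\E$ conjugates both $\A$ and $\B$ to symmetric matrices, and that this $\E$ is exactly the one built from the shared Perron vectors. This rests on the fact that the symmetrizing diagonal of an irreducible symmetrizable matrix is unique up to a positive scalar and is forced by the Perron data; once that is in hand, the passage through commuting symmetric matrices, their simultaneous diagonalization, and the normalization of the first column are all routine, needing only careful bookkeeping of scalar multiples and repeated use of the identity $\uv\tr\vv=1$.
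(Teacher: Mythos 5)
Your proof is correct, and it reaches the decomposition by a genuinely different (and somewhat more careful) route than the paper's. The paper first invokes simultaneous diagonalization of the commuting diagonalizable matrices $\A$ and $\B$ themselves \citep{Horn:and:Johnson:1985}, asserts via Lemma \ref{Lemma:Canonical} that the common similarity matrix can be taken in the form $\X=\E\K$ with $\E$ positive diagonal and $\K$ orthogonal, and only then reads off $E_j^2=v_j/u_j$ and $[\K]_1$ from the shared Perron column of $\E\K$ and Perron row of $\K\tr\E^{-1}$. You instead establish the shared Perron vectors $\vv,\uv$ directly from commutativity and Perron--Frobenius, show that the symmetrizing conjugation of Lemma \ref{Lemma:Canonical} is forced by this Perron data up to a scalar that cancels (your $\alpha\beta=1$ bookkeeping), so the single matrix $\E=\D_{\vv}^{1/2}\D_{\uv}^{-1/2}$ symmetrizes both $\A$ and $\B$, and only then apply simultaneous orthogonal diagonalization to the commuting symmetric matrices $\E^{-1}\A\E$ and $\E^{-1}\B\E$, fixing $[\K]_1$ by simplicity of the Perron root. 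What each buys: the paper's argument is shorter, but its step ``we can set $\X=\E\K$'' is exactly the point that needs justification when eigenvalues are repeated, since Lemma \ref{Lemma:Canonical} applies to each matrix separately and does not by itself say the \emph{simultaneous} diagonalizer has that form; your uniqueness-of-$\E$ observation plus the spectral theorem for commuting symmetric matrices supplies that justification, at the cost of a slightly longer argument. The one place you are terse --- ``it may be taken as the first column of $\K$'' --- is fine, but deserves the explicit remark that the $r(\A)$-eigenspace of $\E^{-1}\A\E$ is one-dimensional, so in any simultaneous orthogonal diagonalization the corresponding column is $\pm\E^{-1}\vv$ and a permutation and sign flip of columns preserves the diagonalization.
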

\begin{proof}
Since $\A$ and $\B$ are symmetrizable, each can be represented by canonical form \eqref{eq:CanonicalForm} which contains diagonal matrix $\Lam$ and similarity matrices $(\E\K)^{-1}= \K\tr\E^{-1}$, so $\A$ and $\B$ are diagonalizable.  Since $\A$ and $\B$ commute by hypothesis, they can be simultaneously diagonalized \citep[Theorem 1.3.19, p. 52]{Horn:and:Johnson:1985}, which means there exists an invertible $\X$ such that
$\A = \X \Lam_A \X^{-1} \text{\ and \ } \B = \X \Lam_B \X^{-1}$.  Clearly the columns of $\X$ are right eigenvectors of $\A$ and $\B$, and the rows of $\X^{-1}$ are left eigenvectors of $\A$ and $\B$, since
\ab{
\A [\X]_i = \X \Lam_A \X^{-1} [\X]_i 
=
\X \Lam_A \ev_i 
=
\X \lambda_i(\A) \ev_i 
=
\lambda_i(\A) [\X]_i , 
}
etc., so from Lemma \ref {Lemma:Canonical} we can set $\X = \E\K$ to give 
\ab{
\A &= \X \Lam_A\X^{-1} = \E\K \Lam_A (\E\K)^{-1} = \E\K \Lam_A \K\tr \E^{-1},\\
\B &= \X \Lam_B\X^{-1} = \E\K \Lam_B (\E\K)^{-1} = \E\K \Lam_B \K\tr \E^{-1}.
}

Without loss of generality, the Perron root is indexed as $\lambda_1$, so $r(\A) = \lambda_1(\A)$, $r(\B) = \lambda_1(\B)$.  Since $\A$ and $\B$ are irreducible,
\ab{
\vv  &\equiv \vv(\A) = \vv(\B) = [\E\K]_1 > \0, \\
\uv\tr  &\equiv \uv(\A)\tr  = \uv(\B)\tr   = [\K\tr\E^{-1}]^1> \0 .
}
Next, $\E$ is solved in terms of $\uv\tr$ and $ \vv$: $[\E\K]_1 = \E [\K]_1 = \vv$, so $[\K]_1 = \E^{-1} \vv$,
{and} $[\K\tr\E^{-1}]^1 = [\K\tr]^1 \E^{-1} = \uv\tr$, so $[\K\tr]^1= \uv\tr  \E$, which combined give $K_{1j} = E_j^{-1} v_{j} =  u_{j} E_j$,
hence
\an{\label{eq:Ej2}
E_j^2 = \fracinline{v_{j}}{u_{j}},
}
so 
$E_j= \sqrt{\fracinline{v_{j}}{u_{j}}}$ and 
\an{\label{eq:E}
\E = \D_{\vv}^{1/2} \D_{\uv}^{-1/2}.
}
The first column of $\K$ evaluates to
\an{\label{eq:K1}
[\K]_1 = \E^{-1} \vv = \D_{\vv}^{-1/2} \D_{\uv}^{1/2} \vv
=  \D_{\vv}^{1/2} \D_{\uv}^{1/2} \ev. 
}
Here $\uv$ and $\vv$ were chosen as given, but alternatively $\E$ and $\vv$ can be chosen as given, and then $u_j = v_j / E_j^2$.
\end{proof}

\begin{theorem}[Sum-of-Squares Solution for the Spectral Radius]\label{Theorem:SpectralRadius}
\ \\
Let $\A$ and $\B$ be $n \times n$ symmetrizable irreducible nonnegative matrices that commute. Let $r_A\equiv r(\A) = \lambda_{A1}$ and $r_B \equiv r(\B) = \lambda_{B1}$ refer to their Perron roots, and $\{ \lambda_{Ai} \}$ and $\{ \lambda_{Bi} \}$ represent all of the eigenvalues of $\A$ and $\B$, respectively.  Let $\uv\tr$ and $\vv$ be the common left and right Perron vectors of $\A$ and $\B$ (Lemma \ref {Lemma:CanonicalFormAB}).  Let $\D$ be a positive diagonal matrix and define 
\ab{
\M(m) \eqdef \A [(1-m) \, r_B \, \I + m \B], \qquad m \in [0,1].
}
Let $\vv(m) \equiv \vv(\M(m) \D)$ and $\uv(m)\tr \equiv \uv(\M(m) \D)\tr$ refer to the right and left Perron vectors of $\M(m)\D$.

Then
\an{\label{eq:rho-y2}
r(\M(m) \D) &=  \sum_{i=1}^n \lambda_{Ai} \, [(1{-}m) r_B + m \lambda_{Bi} ] \, y_i(m)^2,
}
where
\an{\label{eq:yi}
y_i(m) ^2 
&
= 
\frac{1} { \dsty\sum_j D_j  \frac{ u_j}{v_j}v_j(m)^2 }\left[ \sum_j K_{ji} D_j  \left(\frac{u_j}{v_j}\right)^{1/2}  v_j(m)\right]^2,
}
and $\K$ is from the canonical form in Lemma \ref{Lemma:CanonicalFormAB}.
\end{theorem}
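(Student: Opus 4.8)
The plan is to use the commuting--symmetrizable hypothesis to diagonalize $\A$ and $\B$ simultaneously, rewrite $\M(m)\D$ as a positive diagonal similarity transform of a genuinely \emph{symmetric} matrix, and then read off both the spectral radius and the quantities $y_i(m)^2$ from the eigenvector of that symmetric matrix. The only non-mechanical ingredient is recognizing the symmetric reduction; everything after that is diagonal-matrix bookkeeping together with the identity $E_j^2 = v_j/u_j$ supplied by Lemma~\ref{Lemma:CanonicalFormAB}.

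First I would apply Lemma~\ref{Lemma:CanonicalFormAB} to get the common factors $\E = \D_\vv^{1/2}\D_\uv^{-1/2}$ (so $E_j^2 = v_j/u_j$), the common orthogonal $\K$ with $[\K]_1 = \D_\vv^{1/2}\D_\uv^{1/2}\ev$, and the eigenvalue matrices $\Lam_A, \Lam_B$ in \eqref{eq:CanonA}--\eqref{eq:CanonB}. Setting $\Smh(m) \eqdef \K\,\Lam_A[(1-m)\,r_B\,\I + m\Lam_B]\,\K\tr$, which is symmetric and whose diagonal eigenvalue factor has entries $\lambda_{Ai}[(1-m)r_B + m\lambda_{Bi}]$, one obtains $\M(m) = \E\,\Smh(m)\,\E^{-1}$, hence, since diagonal matrices commute,
\[
\M(m)\D \;=\; \E\,\Smh(m)\,\D\,\E^{-1} \;=\; \G\,\T(m)\,\G^{-1}, \qquad \G \eqdef \E\D^{-1/2}, \quad \T(m) \eqdef \D^{1/2}\Smh(m)\D^{1/2}.
\]
Thus $\M(m)\D$ is conjugate, via the \emph{positive diagonal} matrix $\G$, to the symmetric matrix $\T(m) = \D^{1/2}\K\,\Lam_A[(1-m)r_B\I + m\Lam_B]\,\K\tr\D^{1/2}$. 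In particular $r(\M(m)\D) = r(\T(m))$ and this number is an eigenvalue of $\T(m)$; if $\w$ denotes a corresponding real eigenvector of $\T(m)$, then $\G\w$ is an eigenvector of $\M(m)\D$ for that same eigenvalue, so (the Perron root being simple) $\vv(m)$ is a positive scalar multiple of $\G\w$, i.e.\ $\w$ is a positive scalar multiple of $\D^{1/2}\E^{-1}\vv(m)$.

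Next I would extract the formulas. Since $\w$ is an eigenvector of the symmetric $\T(m)$ for the eigenvalue $r(\M(m)\D)$, we have $r(\M(m)\D) = \w\tr\T(m)\w/(\w\tr\w)$. Putting $\z \eqdef \K\tr\D^{1/2}\w$ and using orthogonality of $\K$ turns the numerator into $\z\tr\Lam_A[(1-m)r_B\I + m\Lam_B]\z = \sum_i \lambda_{Ai}[(1-m)r_B + m\lambda_{Bi}]\,z_i^2$, which gives \eqref{eq:rho-y2} with $y_i(m)^2 = z_i^2/(\w\tr\w)$. It then remains to substitute $\w = c\,\D^{1/2}\E^{-1}\vv(m)$, the scalar $c$ cancelling between numerator and denominator: this yields $(\D^{1/2}\w)_j = c\,D_j E_j^{-1} v_j(m) = c\,D_j(u_j/v_j)^{1/2}v_j(m)$, hence $z_i = c\sum_j K_{ji}D_j(u_j/v_j)^{1/2}v_j(m)$, while $\w\tr\w = c^2\sum_j D_j E_j^{-2} v_j(m)^2 = c^2\sum_j D_j(u_j/v_j)v_j(m)^2$; dividing produces exactly \eqref{eq:yi}.

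The steps after the symmetric reduction are routine, and I expect no real obstacle there. The two points needing a little care are: (i) verifying that $\M(m)\D$ is irreducible, so that $\vv(m)$, $\uv(m)\tr$ exist, are strictly positive, and belong to a simple real eigenvalue — this is immediate for $m \in [0,1)$, since then $(1-m)r_B\I + m\B$ has strictly positive diagonal (recall $r_B = r(\B) > 0$ because $\B$ is irreducible nonnegative) and $\A$ is irreducible, so their product is irreducible; the identity then extends to $m = 1$ by continuity of $r(\M(m)\D)$ and of the simple Perron eigenvector; and (ii) confirming that the real eigenvector $\w$ of $\T(m)$ used above is the one mapped by $\G$ onto $\vv(m)$, which holds because $\G$ is positive diagonal and $\vv(m) > 0$.
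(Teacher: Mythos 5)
Your proposal is correct and follows essentially the same route as the paper's proof: simultaneous canonical form from Lemma \ref{Lemma:CanonicalFormAB}, diagonal similarity of $\M(m)\D$ to the symmetric matrix $\D^{1/2}\K\Lam_A[(1-m)r_B\I+m\Lam_B]\K\tr\D^{1/2}$, evaluation of the Rayleigh quotient at its Perron eigenvector, and recovery of that eigenvector from $\vv(m)$ via the positive diagonal conjugation, with the scalar cancelling. The only cosmetic differences are that you use the Rayleigh quotient at the known eigenvector rather than the Rayleigh--Ritz maximization the paper cites, and you are slightly more explicit than the paper about irreducibility of $\M(m)\D$.
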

\begin{proof} 
One can represent $\M(m)$ using the canonical forms \eqref{eq:CanonA}, \eqref{eq:CanonB}:
\an{ \label{eq:CanonM}
\M(m) &= \D_{\vv}^{1/2} \D_{\uv}^{-1/2} \K \Lam_A[(1{-}m) \, r_B \, \I + m \Lam_B] \K\tr \D_{\vv}^{-1/2} \D_{\uv}^{1/2}.
}
This form will be used to produce a symmetric matrix similar to $\M(m) \D$, which allows use of the Rayleigh-Ritz variational formula for the spectral radius.    The expression will be seen to simplify to the sum of squared terms. 

For brevity, (recalling $\E = \D_{\vv}^{1/2} \D_{\uv}^{-1/2}$ from \eqref{eq:E}) define the symmetric matrices:
\an{
\H_m &\eqdef \K \Lam_A [(1{-}m) r_B \I  + m \Lam_B] \K\tr,  \text{\ and}\label{eq:Hm}\\
\S_m &\eqdef \D^{1/2} \H_m \D^{1/2} 
=  \D^{1/2} \K \Lam_A[(1-m) r_B \I + m \Lam_B] \K\tr \D^{1/2} \label{eq:S}
\\
&
=  \D^{1/2} \E^{-1} \M(m) \E \D^{1/2}. \notag
}
Thus $\M(m) = \E \H_m \E^{-1}$.  Since $\M(m) \geq \0$ is irreducible, and $\E$ and $\D$ are positive diagonal matrices, then $\H_m, \S_m \geq \0$ are irreducible.  The following identities are obtained:
\ab{
r(\M(m) \D) &= r(  \E \H_m \E^{-1} \D ) = r(  \H_m \E^{-1} \D \E) 
= r( \H_m \D )  = r(\D^{1/2} \H_m \D^{1/2} ) 
 \\
&
= r(\S_m).
}
Since $\S_m$ is symmetric, we may apply the Rayleigh-Ritz variational formula for the spectral radius \citep[Theorem 4.2.2, p. 176]{Horn:and:Johnson:1985}:
\an{\label{eq:RayleighRitz}
r(\S_m) = \max_{\x \neq \0} \frac{\x\tr \S_m \x}{\x\tr \x}.
}
This yields
\an{\label{eq:xVariational}
r(\M(m) \D) & = r(\S_m) = \max_{\x\tr \x = 1} \x\tr \S_m \x \notag\\
&=\max_{\x\tr \x = 1}  \x\tr \D^{1/2}  \K \Lam_A [(1-m) r_B \I + m \Lam_B] \K\tr \D^{1/2} \x.
}
Any $\xvh$ that yields the maximum in \eqref{eq:xVariational} is an eigenvector of $\S_m$ \citep[p. 33]{Gould:1966:Variational}.  Since $\S_m \geq \0$ is irreducible, by Perron-Frobenius theory, $\xvh(m) > 0$ is therefore the unique left and right Perron vector of $\S_m$.  This allows one to write
\an{\label{eq:xhat}
r(\M(m) \D) 
&=  \xvh(m)\tr \D^{1/2}  \K \Lam_A [(1-m) r_B \I + m \Lam_B] \K\tr \D^{1/2} \, \xvh(m).
}
Define 
\an{\label{eq:ydef}
\y(m) \eqdef \K\tr  \D^{1/2}  \xvh(m).
}
Substitution of \eqref{eq:ydef} into \eqref{eq:xhat} yields \eqref{eq:rho-y2}:
\ab{
r(\M(m) \D) &=  \xvh(m)\tr \D^{1/2}  \K \Lam_A [(1{-}m) r_B \I + m \Lam_B] \K\tr \D^{1/2} \, \xvh(m) \notag \\
&= \y(m)\tr  \Lam_A [(1{-}m)  r_B \I + m \Lam_B] \, \y(m) \notag \\
&= \sum_{i=1}^n \lambda_{Ai} [(1{-}m)  r_B + m \lambda_{Bi} ] \, y_i(m)^2 . 
}
Next, $\y(m)$ will be solved in terms of $\vv(m)$ and $\uv(m)$ by solving for $\xvh(m)$, using the following two facts.  For brevity, define $\Lam_m \eqdef  \Lam_A [(1-m) r_B \I + m \Lam_B]$, so $ \M(m) =  \E \K  \Lam_m \K\tr \E^{-1}$:
\an{
1.& \ r(\M(m)\D) \ \vv(m) = \M(m) \D \, \vv(m)
=  \E \K  \Lam_m \K\tr \E^{-1} \D \, \vv(m);  \label{fact:v} \\
2.& \ r(\M(m) \D) \ \xvh(m) =  \D^{1/2}  \K  \Lam_m \K\tr \D^{1/2} \, \xvh(m). \label{fact:x}
}

Multiplication on the left by $\E \D^{-1/2}$ on both sides of \eqref{fact:x} reveals the right Perron vector $\vv(m) = \vv(\M(m)\D)$:
\an{
r(\M(m) \D) \ ( \E \D^{-1/2}) \,\xvh(m)  
&= ( \E \D^{-1/2}) \D^{1/2}  \K \Lam_m  \K\tr  \D^{1/2} \, \xvh(m) \notag \\
&= \E  \K \Lam_m \K\tr ( \E^{-1} \D \E \D^{-1}) \D^{1/2} \, \xvh(m) \notag \\
&= ( \E  \K \Lam_m \K\tr \E^{-1} \D ) ( \E \D^{-1/2} \xvh(m)) \notag \\
&= \M(m) \D \ \ ( \E \D^{-1/2} \xvh(m)) , \label{eq:eigvecDerivation}
}
which shows that $\E \D^{-1/2} \xvh(m)$ is the right Perron vector of $\M(m) \D$, unique up to scaling, i.e.
\ab{
\vv(m) = \ch(m) \, \E \D^{-1/2} \xvh(m)
=  \ch(m) \, \D_{\vv}^{1/2} \D_{\uv}^{-1/2} \D^{-1/2} \xvh(m) ,
}
for some $\ch(m)$ to be solved, which gives
\an{\label{eq:x=DDv}
\xvh(m) =  \frac{1}{\ch(m)} \D_{\vv}^{-1/2} \D_{\uv}^{1/2}  \D^{1/2}  \ \vv(m) .
}
The normalization constraint $\xvh(m)\tr \xvh(m) = 1$ yields
\ab{
1 &= \xvh(m)\tr \xvh(m) =  \frac{1}{\ch(m)^2} \vv(m)\tr \D_{\vv}^{-1} \D_{\uv}  \D  \, \vv(m),
}
so
\an{\label{eq:ch}
\ch(m) &= \sqrt{\vv(m)\tr \, \D_{\vv}^{-1} \D_{\uv} \D \, \vv(m) }.
}
Substitution for $\xvh(m)$ now produces \eqref{eq:yi}:
\an{
\y(m) &\eqdef \K\tr  \D^{1/2}  \xvh(m)
= \K\tr  \D^{1/2}  \frac{1}{\ch(m)} \D_{\vv}^{-1/2} \D_{\uv}^{1/2}  \D^{1/2}  \ \vv(m) \notag
\\
&
=  \frac{1}{\sqrt{\vv(m)\tr \ \D_{\vv}^{-1} \D_{\uv} \D \, \vv(m) }} \ \K\tr  \D_{\vv}^{-1/2} \D_{\uv}^{1/2}  \D  \, \vv(m).  \label{eq:yvec}  
}
Each element of $\y(m)$ is thus
\ab{
y_i(m) 
&
= 
\frac{1} {\sqrt{ \dsty\sum_j D_j \frac{u_j}{v_j} v_j(m)^2 } } \sum_j K_{ji} D_j  \frac{u_j^{1/2}}{v_j^{1/2} }v_j(m),\\
&
= 
\frac{1} {\sqrt{ \dsty\sum_j D_j  \left( \frac{v_j(m)}{E_j}\right)^2} } \sum_j K_{ji} D_j  \frac{ v_j(m)}{E_j} ,
}
the last equality using \eqref{eq:Ej2}, $v_j / u_j = E_j^{2}$, which shows the role of terms $v_j(m)/{E_j}$.
Hence
\ab{
y_i(m) ^2 
&
= 
\frac{1} { \dsty\sum_j D_j  \frac{ u_j}{v_j} v_j(m)^2}\left( \sum_j K_{ji} D_j  \left( \frac{u_j}{v_j}\right)^{1/2} v_j(m)\right)^2.
}
\end{proof} 

\subsection{Main Results}

The general open question is to analyze $r( (1-m) \A + m \B)$ as $m$ is varied.  For an arbitrary irreducible nonnegative matrix $\F(m)$ that is a differentiable function of $m$, the derivative of its spectral radius follows the general relation \citep[Sec. 9.1.1]{Caswell:2000}:
\an{\label{eq:Derivative}
\df{}{m} r(\F(m)) = \uv(\F(m))\tr \df{\F(m)}{m} \vv(\F(m)).
}
The derivatives of $\uv(\F(m))$ and $\vv(\F(m))$ do not appear in \eqref{eq:Derivative} because they are critical points with respect to $r(\F(m))$ \citep{Friedland:and:Karlin:1975}.  From \eqref {eq:Derivative}, therefore, one has the general result that 
\ab{
\df{}{m}r( (1-m) \A + m \B) = \uv( (1-m) \A + m \B)\tr \, (\B - \A)\, \vv ( (1-m) \A + m \B),
}
but this is not very informative.  With the structures introduced in the preparatory results above, more specific results can be provided.

\begin{theorem}[Main Result]\label{Theorem:Main}
Let $\A$ and $\B$ be $n \times n$ symmetrizable irreducible nonnegative matrices that commute with each other, with Perron roots $r_A\equiv r(\A) = \lambda_{A1}$ and $r_B \equiv r(\B) = \lambda_{B1}$, and common left and right Perron vectors, $\uv\tr$ and $\vv$.  Let $\D$ be a nonscalar positive diagonal matrix, and suppose
\an{\label{eq:M}
\M(m) \eqdef \A [(1{-}m) \, r_B \, \I + m\B], \qquad m \in [0,1].
}
\enumlist{
\item[C1.] If all eigenvalues of $\A$ are positive, then 
\an{\label{eq:EigPos}
\df{}{m}r(\M(m) \D)  < 0.
}
\item[C2.] If all eigenvalues of $\A$ other than $r_A = \lambda_1(\A)$ are negative, then \label{item:EigNeg}
\an{\label{eq:EigNeg}
\df{}{m}r(\M(m) \D)   > 0.
}
\item[C3.]   If $\lambda_{Ai} = 0$ for all $i \in \{2, \ldots, n\}$, then $\dfinline{r(\M(m) \D)}{m} = 0$.
\item[C4.]  If C1 or C2 hold except for some $i \in \{2, \ldots, n\}$ for which $\lambda_{Ai} = 0$, then inequalities \eqref{eq:EigPos} and \eqref{eq:EigNeg} are replaced by non-strict inequalities. 
}	
\end{theorem}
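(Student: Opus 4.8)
The plan is to differentiate $r(\M(m)\D)$ via the general perturbation formula \eqref{eq:Derivative} and then read off the sign of the result from the sum-of-squares structure of Theorem \ref{Theorem:SpectralRadius}. Recall from that proof that $r(\M(m)\D) = r(\S_m)$ with $\S_m = \D^{1/2}\K\Lam_A[(1{-}m)r_B\I + m\Lam_B]\K\tr\D^{1/2}$ symmetric, nonnegative and irreducible on $m \in [0,1]$; so its Perron root is simple and the Rayleigh--Ritz maximizer $\xvh(m)$ --- which is simultaneously its left and right Perron vector --- is a smooth function of $m$. First I would apply \eqref{eq:Derivative} to $\F(m) = \S_m$ (the eigenvector derivatives dropping out as noted there), obtaining
\[
\df{}{m} r(\M(m)\D) = \xvh(m)\tr \df{\S_m}{m}\xvh(m) = \xvh(m)\tr\D^{1/2}\K\Lam_A(\Lam_B - r_B\I)\K\tr\D^{1/2}\xvh(m),
\]
since the only $m$-dependence of $\S_m$ is the affine factor $(1{-}m)r_B\I + m\Lam_B$. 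Substituting $\y(m) = \K\tr\D^{1/2}\xvh(m)$ from \eqref{eq:ydef} and using that $\Lam_A$ and $\Lam_B$ are diagonal collapses this to the key identity
\[
\df{}{m} r(\M(m)\D) = \sum_{i=1}^n \lambda_{Ai}(\lambda_{Bi} - r_B)\,y_i(m)^2 = \sum_{i=2}^n \lambda_{Ai}(\lambda_{Bi} - r_B)\,y_i(m)^2 ,
\]
where the $i=1$ term drops because $\lambda_{A1}(\lambda_{B1}-r_B) = r_A(r_B-r_B) = 0$.

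With this identity in hand the sign analysis is immediate. Because $\B$ is symmetrizable its eigenvalues are real (Lemma \ref{Lemma:1Mutation}), and because it is irreducible nonnegative, Perron--Frobenius gives $\lambda_{Bi} \in [-r_B, r_B)$ for every $i \geq 2$, hence $\lambda_{Bi} - r_B < 0$. So every summand carries the sign of $-\lambda_{Ai}$: under C1 each summand is $\leq 0$ and under C2 each is $\geq 0$, while under C3 every summand is identically zero, giving $\dfinline{r(\M(m)\D)}{m} = 0$. Under C4 the summands with $\lambda_{Ai} = 0$ vanish and the rest share one sign, which yields the stated non-strict inequalities; strictness genuinely cannot be recovered there, because the argument below needs \emph{all} of $y_2(m),\ldots,y_n(m)$ to vanish, whereas C4 only constrains those indices with $\lambda_{Ai} \neq 0$.

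The one substantive step --- and the main obstacle --- is promoting C1 and C2 to strict inequalities, i.e.\ ruling out that $y_2(m) = \cdots = y_n(m) = 0$. I would argue by contradiction: if all of these vanish then $\y(m) = y_1(m)\ev_1$, so $\D^{1/2}\xvh(m) = y_1(m)[\K]_1 = y_1(m)\D_{\vv}^{1/2}\D_{\uv}^{1/2}\ev$ by Lemma \ref{Lemma:CanonicalFormAB}; positivity of $\xvh(m)$ forces $y_1(m) \neq 0$ and $\xvh(m) \propto \D^{-1/2}\D_{\vv}^{1/2}\D_{\uv}^{1/2}\ev$. Feeding this into \eqref{eq:x=DDv} and collapsing the commuting diagonal factors (using $\E = \D_{\vv}^{1/2}\D_{\uv}^{-1/2}$ and $\D_{\vv}\ev = \vv$) gives $\vv(m) \propto \D^{-1}\vv$. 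But $\vv(m)$ is the Perron vector of $\M(m)\D$, and since $\B\vv = r_B\vv$ and $\A\vv = r_A\vv$ one computes $\M(m)\D(\D^{-1}\vv) = \M(m)\vv = r_A r_B\vv$; hence $r_A r_B\vv = r(\M(m)\D)\D^{-1}\vv$, so $\D\vv$ is a scalar multiple of $\vv$, and comparing components (using $\vv > \0$) forces $\D = c\I$, contradicting the hypothesis that $\D$ is nonscalar. Therefore some $y_i(m)$ with $i \geq 2$ is nonzero, which makes the corresponding summand strictly negative under C1 and strictly positive under C2, establishing \eqref{eq:EigPos} and \eqref{eq:EigNeg}. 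I expect the bookkeeping in this last step --- tracking the diagonal similarity factors back to $\vv(m)$ --- to be the fiddliest part, but it is routine given the explicit canonical form of Lemma \ref{Lemma:CanonicalFormAB}.
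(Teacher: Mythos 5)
Your proposal is correct and follows essentially the same route as the paper: pass to the symmetric matrix $\S_m$, apply the derivative formula \eqref{eq:Derivative}, reduce to the sum $\sum_{i\geq 2}\lambda_{Ai}(\lambda_{Bi}-r_B)y_i(m)^2$, and rule out $y_2(m)=\cdots=y_n(m)=0$ when $\D$ is nonscalar. Your strictness step is in fact a slightly more careful rendering of the paper's: where the paper's equation \eqref{eq:ym} identifies the vector with $\vv$ and concludes $\vv\propto\D\vv$ directly, you correctly track that the hypothesis forces $\vv(m)\propto\D^{-1}\vv$ and then close the contradiction via $\M(m)\vv=r_Ar_B\vv$, reaching the same conclusion that $\D$ must be scalar.
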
  
\begin{proof}
The sum-of-squares form in Theorem \ref{Theorem:SpectralRadius} is now utilized to analyze the derivative of the spectral radius.    Application of \eqref{eq:Derivative} gives
\ab{
\df{}{m} r(\M(m) \D) 
&=  \xvh(m)\tr \D^{1/2}  \K \Lam_A \df{ [(1-m) r_B \I + m \Lam_B]}{m} \K\tr \D^{1/2} \, \xvh(m)
\\
&
=
\xvh(m)\tr \D^{1/2}  \K \Lam_A ( \Lam_B-r_B \I ) \K\tr \D^{1/2} \, \xvh(m).
}
Substitution with $\y(m) \eqdef \K\tr  \D^{1/2} \, \xvh(m)$ yields:
\an{\label{eq:y2}
\df{ }{m} r(\M(m) \D)
&= \y(m)\tr  \Lam_A ( \Lam_B - r_B \I ) \, \y(m)
= \sum_{i=1}^n \lambda_{Ai} \, (\lambda_{Bi} - r_B) \, y_i(m)^2.
}

We know the following about the terms in the sum in \eqref{eq:y2}:
\enumlist{
\item $\lambda_{B1} - r_B = 0$.  Thus the first term $i=1$ of the sum is zero.  \label{item:lambda0}
\item \emph{For $i \in \{2, \ldots, n\}$, $\lambda_{Bi} - r_B < 0$, hence $(\lambda_{Bi} - r_B) y_i^2 \leq 0$}.  Since $\B$ is symmetrizable, $\lambda_{Bi} \in \Reals$.  Since $\B$ is irreducible the Perron root has multiplicity $1$, and $| \lambda_{Bi} | \leq r_B$ \citet [Theorems 1.1, 1.5]{Seneta:2006}. Together these imply $\lambda_{Bi}  < r_B$ for $i \in \{2, \ldots, n\}$. \label{item:lambdaneg}
\item \emph{\label{item:y not 0}$y_i(m) \neq 0$ for at least one $i \in \{2, \ldots, n\}$, whenever $\D \neq c \, \I$ for any $c > 0$.}   \label{item:ypos}  
Suppose to the contrary that $y_i(m) = 0$ for all $i \in \{2, \ldots, n\}$.  That means $\y(m) = y_1 (m) \, \ev_1$ so \eqref{eq:yvec} becomes
\an{\label{eq:ym}
\y(m) =\ y_1(m) \, \ev_1 =  \ch(m)^{-1} \ \K\tr  \D_{\vv}^{-1/2} \D_{\uv}^{1/2}  \D  \, \vv.
}

Now multiply on the left by nonsingular $\D_{\vv}^{1/2} \D_{\uv}^{-1/2} \K$, and use $[\K]_1 = \D_{\vv}^{1/2} \D_{\uv}^{1/2} \ev$ \eqref{eq:K1}:
\an{
y_1(m) \, (\D_{\vv}^{1/2} \D_{\uv}^{-1/2} \K) \,\ev_1 
&= y_1(m) \, \D_{\vv}^{1/2} \D_{\uv}^{-1/2} [\K]_1 \notag
\\
&
= y_1(m) \, \D_{\vv}^{1/2} \D_{\uv}^{-1/2} \D_{\vv}^{1/2} \D_{\uv}^{1/2} \ev \notag
\\
&
= y_1(m) \,   \vv ,  \label{eq:y1v}
\\
\ch(m)^{-1}  (\D_{\vv}^{1/2} \D_{\uv}^{-1/2} \K)  \K\tr  &\D_{\vv}^{-1/2} \D_{\uv}^{1/2}  \D  \, \vv
=
 \ch(m)^{-1} \  \D  \, \vv. \label{eq:cDv}
}
Since \eqref{eq:y1v} and \eqref{eq:cDv} are equal by \eqref{eq:ym}, then $\vv = (y_1(m)/\ch(m)) \D \vv > \0$, which implies $\D = (\ch(m)/y_1(m)) \I$, contrary to hypothesis that $\D \neq c \, \I$ for any $c > 0$.  Thus $\D$ being nonscalar implies that $y_i(m) \neq 0$ for at least one $i \in \{2, \ldots, n\}$. 
}

Combining points \ref{item:lambdaneg}., and \ref{item:ypos}.\ above, we have $(\lambda_{Bi} - r_B) y_i(m)^2 < 0$ for at least one $i \in \{2, \ldots, n\}$, while from point \ref{item:lambda0}., $\lambda_{A1}(\lambda_{B1} - r_B) y_1(m)^2 = 0$.  Thus, if the signs of $\lambda_{Ai}$, $i=2, \ldots, n$ are the same, then the nonzero terms in the sum in \eqref{eq:y2} all have the same sign, opposite from $\lambda_{Ai}$, and there is at least one such nonzero term. Therefore,
\enumlist{
\item if $\lambda_{Ai} > 0$ for all $i$, then $\lambda_{Ai} (\lambda_{Bi} -  r_B) < 0 \ \forall i \geq 2$, thus
\ab{
\df{ }{m}r(\M(m) \D) = \sum_{i=2}^n \lambda_{Ai} (\lambda_{Bi} -  r_B) y_i^2(m) <  0;
}
\item if $\lambda_{Ai} < 0$ for $i = 2, \ldots, n$, then $\lambda_{Ai} (\lambda_{Bi} - r_B) > 0 \ \forall i \geq 2$, thus
\ab{
\df{ }{m}r(\M(m) \D) = \sum_{i=2}^n \lambda_{Ai} (\lambda_{Bi} -  r_B) y_i^2(m) >  0;
}
\item if $\lambda_{Ai} = 0$ for all $i \in \{2, \ldots, n\}$, then all the terms in \eqref{eq:y2} are zero so $\dfinline{r(\M(m) \D)}{m} = 0$;
\item 
if $\lambda_{Ai} = 0$ for some $i \in \{2, \ldots, n\}$, we cannot exclude the possibility that the one necessary nonzero value among $y_2(m)$, \ldots, $y_n(m)$ happens to be $y_i(m)$, while $y_j(m) = 0$ for all $j \notin \{i, 1\}$, in which case all the terms in \eqref{eq:y2} would be zero.  In this case the inequalities in \eqref{eq:EigPos} and \eqref{eq:EigNeg} cannot be guaranteed to be strict.
}	
If the non-Perron eigenvalues of $\A$ are a mix of positive, negative, or zero values, there may be positive, negative,  or zero terms $\lambda_{Ai} (\lambda_{Bi} - r_B) y_i^2$ for $i=2, \ldots, n$, so the sign of $\dfinline{r(\M(m) \D)}{m}$ depends on the particular magnitudes of $\lambda_{Ai}$, $\lambda_{Bi}$, $r_B$, and $y_i(m)$.
\end{proof}

\begin{theorem}[Main Result Extension]\label{Theorem:MainAB}
Let $\A$ and $\B$ be $n \times n$ symmetrizable irreducible nonnegative matrices that commute with each other, with equal Perron roots $r(\A) = r(\B) = \lambda_{A1}= \lambda_{B1}$, and common left and right Perron vectors, $\uv\tr$ and $\vv$.  Let $\lambda_{Ai}$ and $\lambda_{Bi}$, $i = 2, \ldots, n$ be the non-Perron eigenvalues.  Let $\D$ be a nonscalar positive diagonal matrix, and suppose
\ab{
\M(m) \eqdef (1{-}m)  \A + m\B, \qquad m \in [0,1].
}
\enumlist{
\item If $\lambda_{Ai} > \lambda_{Bi}$ for $i=2, \ldots n$, then $\dfinline{r(\M(m) \D)}{m}  < 0$ and $r(\A\D) > r(\B\D)$. \label{item:<}
\item If $\lambda_{Ai} < \lambda_{Bi}$ for $i=2, \ldots n$, then $\dfinline{r(\M(m) \D)}{m} > 0$ and $r(\A\D) < r(\B\D)$. \label{item:>}
\item If $\lambda_{Ai} = \lambda_{Bi}$ for $i=2, \ldots n$, then $\dfinline{r(\M(m) \D)}{m} = 0$ and $r(\A\D) = r(\B\D)$.
\item If $\lambda_{Ai} = \lambda_{Bi}$ for at least one $i \in \{2, \ldots, n\}$, then the inequalities in \ref{item:<} and \ref{item:>} are replaced by non-strict inequalities.
}
\end{theorem}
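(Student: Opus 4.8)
The plan is to rerun the sum-of-squares machinery of Theorems~\ref{Theorem:SpectralRadius} and~\ref{Theorem:Main}, now with $\B$ entering symmetrically to $\A$ instead of being absorbed into a bracket $[(1{-}m)r_B\I+m\B]$. Since $\A$ and $\B$ commute and are simultaneously symmetrizable, Lemma~\ref{Lemma:CanonicalFormAB} supplies a common $\E=\D_{\vv}^{1/2}\D_{\uv}^{-1/2}$ and orthogonal $\K$ with $[\K]_1=\D_{\vv}^{1/2}\D_{\uv}^{1/2}\ev$ such that $\A=\E\K\Lam_A\K\tr\E^{-1}$ and $\B=\E\K\Lam_B\K\tr\E^{-1}$, whence
\[
\M(m)=(1{-}m)\A+m\B=\E\K\,\Lam_m\,\K\tr\E^{-1},\qquad \Lam_m\eqdef(1{-}m)\Lam_A+m\Lam_B ,
\]
a diagonalization of $\M(m)$ with eigenvalues $(1{-}m)\lambda_{Ai}+m\lambda_{Bi}$, the first being the common Perron root $\lambda_{A1}$. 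Because $\M(m)\geq(1{-}m)\A$ entrywise, with $(1{-}m)\A$ irreducible for $m\in[0,1)$ and $\M(1)=\B$ irreducible, $\M(m)$ is irreducible and symmetrizable by the \emph{same} $\E$ throughout $[0,1]$. Hence, exactly as in the proof of Theorem~\ref{Theorem:SpectralRadius} (reading $\Lam_m$ in place of the factor $\Lam_A[(1{-}m)r_B\I+m\Lam_B]$), the matrix $\M(m)\D$ is similar to the nonnegative irreducible symmetric matrix $\S_m=\D^{1/2}\K\Lam_m\K\tr\D^{1/2}$, and Rayleigh--Ritz together with the substitution $\y(m)=\K\tr\D^{1/2}\xvh(m)$, where $\xvh(m)>\0$ is the Perron vector of $\S_m$, yields
\[
r(\M(m)\D)=\sum_{i=1}^n\big[(1{-}m)\lambda_{Ai}+m\lambda_{Bi}\big]\,y_i(m)^2 ,
\]
with $y_i(m)^2$ given by formula~\eqref{eq:yi} verbatim.

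Next I would differentiate as in the proof of Theorem~\ref{Theorem:Main}: applying~\eqref{eq:Derivative} (equivalently, the envelope property of the Rayleigh--Ritz maximum, by which the $\xvh(m)$-derivatives drop out), using $\dfinline{\S_m}{m}=\D^{1/2}\K(\Lam_B-\Lam_A)\K\tr\D^{1/2}$ and the same substitution, gives
\[
\df{}{m}r(\M(m)\D)=\y(m)\tr(\Lam_B-\Lam_A)\,\y(m)=\sum_{i=2}^n(\lambda_{Bi}-\lambda_{Ai})\,y_i(m)^2 ,
\]
the $i=1$ term vanishing since $\lambda_{A1}=\lambda_{B1}$. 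The argument in the proof of Theorem~\ref{Theorem:Main} showing that a nonscalar $\D$ forces $y_i(m)\neq0$ for at least one $i\in\{2,\dots,n\}$ carries over word for word, since the representation~\eqref{eq:yi} of the $y_i(m)$ and the identity $[\K]_1=\D_{\vv}^{1/2}\D_{\uv}^{1/2}\ev$ are unchanged. Therefore: if $\lambda_{Ai}>\lambda_{Bi}$ for all $i\geq2$ every nonzero summand is strictly negative, so $\dfinline{r(\M(m)\D)}{m}<0$; if $\lambda_{Ai}<\lambda_{Bi}$ for all $i\geq2$ it is strictly positive; if $\lambda_{Ai}=\lambda_{Bi}$ for all $i\geq2$ every term vanishes and it is $0$; and if equality holds at some but not all $i$, the obligatory nonzero $y_i(m)$ may sit at such an index, so only non-strict inequalities are guaranteed.

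Finally I would pass from the derivative to the endpoint comparison. Since $\S_m$ is irreducible and depends analytically (indeed linearly) on $m$, its simple Perron root $r(\S_m)=r(\M(m)\D)$ is $C^1$ on $[0,1]$, so the fundamental theorem of calculus gives
\[
r(\B\D)-r(\A\D)=r(\M(1)\D)-r(\M(0)\D)=\int_0^1\df{}{m}r(\M(m)\D)\,dm ,
\]
and the sign of the right-hand side is read off from the constant sign of the integrand established above, yielding $r(\A\D)>r(\B\D)$, $r(\A\D)<r(\B\D)$, $r(\A\D)=r(\B\D)$, and the non-strict counterparts, in the four cases respectively.

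I do not expect a genuine obstacle: the one substantive ingredient --- that a nonscalar $\D$ makes some $y_i(m)$ with $i\geq2$ nonzero --- has already been proved for Theorem~\ref{Theorem:Main} and is reused verbatim. The only points needing care are confirming that $\M(m)=(1{-}m)\A+m\B$ stays irreducible and symmetrizable by the common $\E$ for every $m\in[0,1]$, so that the canonical data of Lemma~\ref{Lemma:CanonicalFormAB} remain the relevant objects, and checking that substituting $\Lam_m=(1{-}m)\Lam_A+m\Lam_B$ for the factor $\Lam_A[(1{-}m)r_B\I+m\Lam_B]$ of Theorems~\ref{Theorem:SpectralRadius} and~\ref{Theorem:Main} leaves every intermediate identity intact.
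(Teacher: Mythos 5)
Your proposal is correct and follows essentially the same route as the paper: the paper's proof likewise reruns the canonical-form/sum-of-squares machinery of Theorem \ref{Theorem:SpectralRadius} with $\Lam_m=(1{-}m)\Lam_A+m\Lam_B$, obtains $\dfinline{r(\M(m)\D)}{m}=\sum_{i=2}^n(\lambda_{Bi}-\lambda_{Ai})\,y_i(m)^2$, and reuses verbatim the nonscalar-$\D$ argument forcing some $y_i(m)\neq0$ with $i\geq2$. Your explicit checks of irreducibility of $\M(m)$ and the integration step from the derivative sign to the endpoint inequality $r(\A\D)$ versus $r(\B\D)$ are sound minor elaborations of what the paper leaves implicit.
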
  
\begin{proof}
The proof follows that of Theorem \ref{Theorem:Main} with some substitutions.  $\M(m)$ has the canonical representation
\ab{
\M(m) \eqdef (1{-}m)  \A + m\B
= \D_{\vv}^{1/2} \D_{\uv}^{-1/2} \K [(1{-}m) \, \Lam_A + m \Lam_B] \K\tr \D_{\vv}^{-1/2} \D_{\uv}^{1/2}.
}
The spectral radius has the sum-of-squares form as developed in \eqref {eq:S}--\eqref{eq:yvec}, where $\xvh(m)$ is as given in \eqref {eq:x=DDv},  and the derivative of the spectral radius evaluates to
\ab{
\df{}{m} r(\M(m) \D)
&=  \xvh(m)\tr \D^{1/2}  \K  \df{ [(1-m) \Lam_A+ m \Lam_B]}{m} \K\tr \D^{1/2} \, \xvh(m)
\\
&
=
\xvh(m)\tr \D^{1/2} \K( \Lam_B- \Lam_A ) \K\tr \D^{1/2} \, \xvh(m).
}
Substitution with $\y(m) \eqdef \K\tr  \D^{1/2} \, \xvh(m)$ yields:
\an{\label{eq:ABy2}
\df{ }{m} r(\M(m) \D)
&= \y(m)\tr   ( \Lam_B - \Lam_A ) \, \y(m) = \sum_{i=1}^n  \, (\lambda_{Bi} -\lambda_{Ai}) \, y_i(m)^2.
}

The relevant facts about \eqref{eq:ABy2} are:
\enumlist{
\item $\lambda_{B1} - \lambda_{A1} = 0$ by construction.  Thus the first term $i=1$ of the sum is zero. 
\item $y_i(m) \neq 0$ for at least one $i \in \{2, \ldots, n\}$, whenever $\D \neq c \, \I$ for any $c > 0$, as in Theorem \ref{Theorem:Main}.   
}	

If $\lambda_{Ai} > \lambda_{Bi}$ for $i=2,\ldots, n$ then all of the terms $ (\lambda_{Bi} -\lambda_{Ai}) \, y_i(m)^2$ in \eqref {eq:ABy2} are nonpositive, and at least one is negative, therefore $\dfinline{ r(\M(m)\D)}{m}$ is negative.  If $\lambda_{Ai} < \lambda_{Bi}$ for $i=2,\ldots, n$ then all of the terms in \eqref {eq:ABy2} are nonnegative, and at least one is positive, therefore $\dfinline{ r(\M(m)\D)}{m}$ is positive.   If $\lambda_{Ai} = \lambda_{Bi}$ for $i=2,\ldots, n$ all of the terms in \eqref {eq:ABy2} are zero so $\dfinline{ r(\M(m)\D)}{m}=0$.

As in Theorem \ref{Theorem:Main}, if $\lambda_{Ai} = \lambda_{Bi}$ for some $i \in \{2, \ldots, n\}$, we cannot exclude the possibility that the necessary nonzero value among $y_2, \ldots, y_n$ happens to be $y_i$, while $y_j = 0$ for all $j \notin \{1, i\}$, in which case all of the terms in \eqref {eq:ABy2} are zero so $\dfinline{ r(\M(m)\D)}{m}=0$.  Thus inequalities in \ref {item:<} and \ref {item:>} are not guaranteed to be strict if there is a single equality between non-Perron eigenvalues of $\A$ and $\B$.  
\end{proof}
\begin{remark}\rm
It is notable here that the relation on the Perron root, $r(\A\D) > r(\B\D)$, for $\A$ and $\B$ with the same Perron root, occurs when the \emph{non}-Perron eigenvalues of $\A$ dominate those of $\B$.  Domination here means $\lambda_{Ai} > \lambda_{Bi}$ where $i$ is the index on the non-Perron eigenvectors as ordered in $\K$.  The tempting question for generalization is whether this relation holds if the orthogonality condition on $\K$ is relaxed.\end{remark}

\begin{corollary}\label{Corollary:n=2}
For the case $n=2$ of Theorem \ref{Theorem:Main}, $\dfinline{r(\A [(1{-}m) \, r_B \, \I + m\B]\D)}{m}$ has the opposite sign of $\lambda_{A2}$, and is zero if $\lambda_{A2}=0$.
\end{corollary}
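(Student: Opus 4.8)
The plan is to read the claim directly off the sum-of-squares formula \eqref{eq:y2} for the derivative of the spectral radius, as established inside the proof of Theorem \ref{Theorem:Main}, specialized to $n = 2$. All hypotheses of Theorem \ref{Theorem:Main} are in force: $\A$ and $\B$ are $2 \times 2$ symmetrizable irreducible nonnegative matrices that commute, $\D$ is a nonscalar positive diagonal matrix, and $\M(m) = \A[(1{-}m)\, r_B\, \I + m\B]$.

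First I would write \eqref{eq:y2} for $n = 2$:
\[
\df{}{m} r(\M(m)\D) = \sum_{i=1}^{2} \lambda_{Ai}\,(\lambda_{Bi} - r_B)\, y_i(m)^2 .
\]
The $i = 1$ term vanishes because $\lambda_{B1} = r_B$, so the derivative reduces to the single term $\lambda_{A2}\,(\lambda_{B2} - r_B)\, y_2(m)^2$.

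Next I would pin down the sign of the $\A$-independent factor $(\lambda_{B2} - r_B)\, y_2(m)^2$. Since $\B$ is symmetrizable, $\lambda_{B2}$ is real; since $\B$ is irreducible and nonnegative, its Perron root $r_B$ is simple and strictly dominates $|\lambda_{B2}|$, so $\lambda_{B2} - r_B < 0$. For $y_2(m)^2$, the argument in the proof of Theorem \ref{Theorem:Main} (which uses that $\D$ is nonscalar) shows $y_i(m) \neq 0$ for at least one $i \in \{2,\ldots,n\}$; when $n = 2$ this forces $y_2(m) \neq 0$, so $y_2(m)^2 > 0$. Hence $(\lambda_{B2} - r_B)\, y_2(m)^2 < 0$, and $\dfinline{r(\M(m)\D)}{m}$ is $\lambda_{A2}$ times a strictly negative number, which yields the opposite-sign statement and the vanishing when $\lambda_{A2} = 0$.

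I expect no real obstacle here: the only step needing attention is noting that ``at least one $i \in \{2,\ldots,n\}$'' collapses to the single index $i = 2$ once $n = 2$, which is immediate from the nonscalar hypothesis on $\D$. Everything else is a direct specialization of Theorem \ref{Theorem:Main} --- equivalently, a one-line consequence of its case analysis C1--C4 (C1 when $\lambda_{A2} > 0$, C2 when $\lambda_{A2} < 0$, C3 when $\lambda_{A2} = 0$).
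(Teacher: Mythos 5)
Your proposal is correct and follows essentially the same route as the paper's own proof: specialize the sum-of-squares derivative formula \eqref{eq:y2} to $n=2$, note the $i=1$ term vanishes since $\lambda_{B1}=r_B$, use the nonscalar hypothesis on $\D$ to force $y_2(m)\neq 0$, and use $\lambda_{B2}<r_B$ to conclude the remaining term $\lambda_{A2}(\lambda_{B2}-r_B)y_2(m)^2$ has the sign opposite to $\lambda_{A2}$ (and vanishes when $\lambda_{A2}=0$). No gaps.
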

\begin{proof}
When $n=2$, $\A$ has only one eigenvalue besides the Perron root.  Therefore $\D \neq c \, \I$ implies $y_2(m) \neq 0$ in \eqref{eq:y2}, so $(\lambda_{B2}-r_B) y_2^2 < 0$.  Thus
\ab{
\df{}{m} r (\A [(1{-}m) \, r_B \, \I + m\B]\D) 
&
= \lambda_{A1}(\lambda_{B1}-r_B) y_1^2 + \lambda_{A2}(\lambda_{B2}-r_B) y_2^2 
\\
&
= r_A * 0 * y_1^2 + \lambda_{A2} (\lambda_{B2}-r_B) y_2^2.
} 
Therefore $\dfinline{ r(\M(m) \D)}{m}$ has the opposite sign of $\lambda_{A2}$, or is $0$ if $\lambda_{A2}=0$.
\end{proof}

The following is immediate:
\begin{corollary} 
In Theorem \ref{Theorem:Main}, the term $r_B \I$ in $\A [(1{-}m) \, r_B \, \I + m\B]\D$ may be replaced by any symmetrizable nonnegative matrix $\C$ that commutes with $\A$ and $\B$ for which $r(\C) = r(\B)$ and $\lambda_{Ci} > \lambda_{Bi}$, $i=2, \ldots, n$. 
\end{corollary}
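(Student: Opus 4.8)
The plan is to re-run the arguments of Theorem~\ref{Theorem:SpectralRadius} and Theorem~\ref{Theorem:Main} with $\C$ substituted for $r_B\,\I$, after observing that those proofs exploit only three features of the matrix occupying the middle factor of $\M(m)$: that it is simultaneously symmetrizable with and commutes with $\A$ and $\B$; that its Perron eigenvalue equals $r(\B)$; and that each of its non-Perron eigenvalues strictly dominates the corresponding non-Perron eigenvalue of $\B$ (for $r_B\,\I$ this last point reads $r_B > \lambda_{Bi}$, $i\ge2$, which holds by irreducibility of $\B$). These are precisely the hypotheses placed on $\C$, so $\C$ is a genuine generalisation of $r_B\,\I$.

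First I would upgrade Lemma~\ref{Lemma:CanonicalFormAB} from the pair $\{\A,\B\}$ to the commuting triple $\{\A,\B,\C\}$: all three are diagonalizable by Lemma~\ref{Lemma:Canonical}, a commuting family of diagonalizable matrices is simultaneously diagonalizable, and the strictly positive right eigenvector $\vv$ of $\A$ is forced, being positive, to be the Perron eigenvector shared with $\B$ and $\C$ (similarly for $\uv\tr$). Hence a single orthogonal $\K$ and a single positive diagonal $\E=\D_{\vv}^{1/2}\D_{\uv}^{-1/2}$ serve all three, with $[\K]_1=\D_{\vv}^{1/2}\D_{\uv}^{1/2}\ev$, and $\M(m)=\A[(1{-}m)\C+m\B]=\E\K\,\Lam_A[(1{-}m)\Lam_C+m\Lam_B]\,\K\tr\E^{-1}$. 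The sum-of-squares computation of Theorem~\ref{Theorem:SpectralRadius} then goes through verbatim with $(1{-}m)r_B+m\lambda_{Bi}$ replaced by $(1{-}m)\lambda_{Ci}+m\lambda_{Bi}$; in particular formula~\eqref{eq:yi} for $y_i(m)^2$ is unchanged, since it refers only to $\K$, $\D$, $\vv$, $\uv$ and $\vv(m)$.

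Differentiating exactly as in the proof of Theorem~\ref{Theorem:Main} then gives
\an{\label{eq:CorDeriv}
\df{}{m}\,r(\M(m)\D)=\sum_{i=1}^{n}\lambda_{Ai}\,(\lambda_{Bi}-\lambda_{Ci})\,y_i(m)^2 .
}
The $i=1$ term vanishes because $\lambda_{B1}-\lambda_{C1}=r(\B)-r(\C)=0$; for $i\ge2$ the factor $\lambda_{Bi}-\lambda_{Ci}$ is strictly negative by hypothesis; and the argument of point~3 in the proof of Theorem~\ref{Theorem:Main}, which uses only the canonical form and the strict positivity of the Perron vector $\vv(m)$ of the irreducible matrix $\M(m)\D$, still yields $y_i(m)\neq0$ for at least one $i\in\{2,\dots,n\}$ whenever $\D$ is nonscalar. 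Therefore every nonzero term in \eqref{eq:CorDeriv} carries the opposite sign to $\lambda_{Ai}$, and conclusions C1--C4 of Theorem~\ref{Theorem:Main} follow word for word. A shorter route is to apply Theorem~\ref{Theorem:MainAB} directly to the commuting symmetrizable pair $\A\C$ and $\A\B$, which share the Perron root $r_A r_B$ and have respective non-Perron eigenvalues $\lambda_{Ai}\lambda_{Ci}$ and $\lambda_{Ai}\lambda_{Bi}$: for each $i\ge2$ their order is dictated by the sign of $\lambda_{Ai}$ because $\lambda_{Ci}>\lambda_{Bi}$, and items~\ref{item:<}--\ref{item:>} of that theorem deliver the claim.

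I do not anticipate a real obstacle here, consistent with the statement being flagged as immediate. The only points requiring any care are routine bookkeeping: verifying that the simultaneous-diagonalization and canonical-form apparatus of Lemma~\ref{Lemma:CanonicalFormAB} extends cleanly from two to three commuting matrices, and checking that $\M(m)\D$ retains the irreducibility used in the proof of Theorem~\ref{Theorem:Main} (so that its Perron vector is strictly positive, as the ``$y_i(m)\neq0$'' step needs) --- the same situation as for the original $\A[(1{-}m)r_B\I+m\B]\D$, handled for $m$ in the open interval by entrywise domination and at the endpoints by continuity of $r(\M(m)\D)$ and of its derivative in $m$.
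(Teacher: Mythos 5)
The paper gives no proof at all here---it labels the corollary ``immediate'' after Theorem \ref{Theorem:Main}---and your write-up is exactly the intended argument: rerun the canonical-form/sum-of-squares machinery of Theorems \ref{Theorem:SpectralRadius} and \ref{Theorem:Main} with $\Lam_C$ replacing $r_B\,\I$, so the derivative becomes $\sum_i \lambda_{Ai}(\lambda_{Bi}-\lambda_{Ci})\,y_i(m)^2$, the $i=1$ term vanishing because $r(\C)=r(\B)$, the factors $\lambda_{Bi}-\lambda_{Ci}$ being negative by hypothesis, and the nonscalar-$\D$ argument still forcing some $y_i(m)\neq 0$. Your shorter route via Theorem \ref{Theorem:MainAB} applied to the commuting pair $\A\C$, $\A\B$ (equal Perron roots $r_A r_B$, non-Perron eigenvalues $\lambda_{Ai}\lambda_{Ci}$ vs.\ $\lambda_{Ai}\lambda_{Bi}$) is also sound, so the proposal is correct and matches the paper's (implicit) approach.
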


Note that the indices $i \in \{2, \ldots, n\}$ are not ordered here by the size of the eigenvalues as is commonly done, but are set by the arbitrary indexing of the non-Perron eigenvectors.

\begin{remark}\rm
Karlin \citep[p. 645]{Karlin:1976} \citep[p. 116]{Karlin:1982} inexplicably asserted that for $\P = \Bmatr{0&1\\1&0}$, $\M(m)=(1-m) \I + m \P$, and $\D$ nonscalar, $r(\M(m)\D)$ decreases in $m$ for $m\in [0,1/2]$, and increases for $m \in [1/2, 1]$ over which $(1{-}m) \I + m \P$ loses its positive definiteness since $\lambda_2(\M(m)) = 1 - 2m$.  His own Theorem 5.2 \citep{Karlin:1982} (Theorem \ref {Theorem:Karlin5.2} here), however, shows that $r ([(1{-}m) \I + m \P]\D) $ decreases in $m$ for all $m \in [0, 1]$.  Karlin correctly intuited that \emph{something} reverses when $\M(m)$ loses positive definiteness (thus departing from the condition of Theorem \ref{Theorem:Karlin5.1}), but the form was wrong.  Perhaps the form he sensed was $\M(m)^2 \D$:
\end{remark}
\begin{corollary} \label {Corollary:M2D}
Let $\P = \Bmatr{0&1\\1&0}$, and $\D$ be a nonscalar positive diagonal matrix.  Then
$\dfinline{r([(1-m) \I + m \P]^2 \D)}{m} 
$ decreases in $m$ when $m \in [0,1/2]$ and increases in $m$ when $m \in [1/2,1]$.  
\end{corollary}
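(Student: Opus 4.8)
\noindent\emph{Proof strategy.} The plan is to use $\P^2 = \I$ to collapse the squared single-step operator $[(1-m)\I + m\P]^2$ into a single-step operator in a new parameter, so that the statement falls out of Karlin's Theorem~\ref{Theorem:Karlin5.2} composed with an elementary parabolic change of variable.

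First I would expand, using $\P^2 = \I$ together with $(1-m)^2 + m^2 = 1 - 2m(1-m)$,
\[
[(1-m)\I + m\P]^2 \;=\; \big[(1-m)^2 + m^2\big]\,\I + 2m(1-m)\,\P \;=\; (1-s)\,\I + s\,\P, \qquad s := 2m(1-m).
\]
Second, I would record the elementary behaviour of $s(m) = 2m - 2m^2$ on $[0,1]$: it increases strictly on $[0,1/2]$, decreases strictly on $[1/2,1]$, and attains its maximum $s(1/2) = 1/2$, so its range is $[0,1/2] \subseteq [0,1]$. Third, since $\P$ is an irreducible stochastic matrix (its periodicity is harmless, as only irreducibility is assumed in Theorem~\ref{Theorem:Karlin5.2}) and $\D$ is a nonscalar positive diagonal matrix, Theorem~\ref{Theorem:Karlin5.2} gives that $\phi(\sigma) := r\big([(1-\sigma)\I + \sigma\P]\D\big)$ is strictly decreasing in $\sigma \in [0,1]$; equivalently, this follows from case~C2 of Theorem~\ref{Theorem:Main} applied with $\A = \B = \P$, whose only non-Perron eigenvalue is $-1 < 0$, after the substitution $\sigma = 1 - m'$. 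Finally I would compose: $r\big([(1-m)\I + m\P]^2\D\big) = \phi(s(m))$, and since $\phi$ is strictly decreasing, this decreases on $[0,1/2]$ where $s(m)$ increases and increases on $[1/2,1]$ where $s(m)$ decreases; by the chain rule the $m$-derivative equals $\phi'(s(m))\,s'(m)$ with $\phi' < 0$ and $s'(m) = 2(1-2m)$, so it vanishes at $m = 1/2$, the common endpoint of the two monotone branches.

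The argument has essentially no obstacle. The only points requiring care are: (i) spotting the collapse via $\P^2 = \I$, which turns the apparent two-event operator into a one-event operator and so brings the problem within the scope of Theorem~\ref{Theorem:Karlin5.2}; (ii) checking that $s(m)$ never leaves $[0,1]$, so that Karlin's monotonicity holds over the entire parameter range with no extension needed; and (iii) the behaviour at $m = 1/2$, where $s'(1/2) = 0$ makes the monotonicity strict only on $[0,1/2)$ and on $(1/2,1]$. If one prefers, the conclusion can be reached purely order-theoretically — a strictly decreasing function of a strictly monotone argument is correspondingly monotone — which avoids any appeal to differentiability of $\phi$ at the endpoints $\sigma \in \{0,1\}$.
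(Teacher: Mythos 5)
Your proof is correct, but it takes a genuinely different route from the paper. You exploit the involution $\P^2=\I$ specific to this $2\times 2$ permutation matrix to collapse $[(1-m)\I+m\P]^2=(1-s)\I+s\P$ with $s=2m(1-m)\in[0,1/2]$, and then compose the strictly decreasing map $\sigma\mapsto r([(1-\sigma)\I+\sigma\P]\D)$ (Karlin's Theorem \ref{Theorem:Karlin5.2}, or equivalently case C2 of Theorem \ref{Theorem:Main} with $\A=\B=\P$) with the parabola $s(m)$; your checks that $s$ stays in $[0,1]$, that only irreducibility (not aperiodicity) is needed, and that strictness at $m=1/2$ is an order-theoretic rather than derivative statement are all in order. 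The paper instead views the square as a product of two independently parametrized factors $\M(m_1)\M(m_2)$, applies Corollary \ref{Corollary:n=2} to get monotonicity in $m_2$ with sign governed by $\lambda_2(\M(m_1))=1-2m_1$ (and symmetrically in $m_1$), and then sets $m_1=m_2=m$. Your argument is more elementary --- it needs none of the paper's new machinery beyond the classical Karlin result --- and it yields extra structure for free (the explicit reparametrization, the symmetry of $r(\M(m)^2\D)$ about $m=1/2$, and the vanishing derivative there); the paper's argument, by contrast, does not rely on $\P^2=\I$ and so extends to products with distinct mixing parameters, which is precisely the situation its Corollary \ref{Corollary:n=2} is designed to handle, and it serves as an illustration of that corollary.
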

\begin{proof}
Since $\lambda_2((1-m) \I + m \P) = 1-2m> 0$ for $m \in [0, 1/2)$ and $1-2m< 0$ for $m \in (1/2,1]$,  Corollary \ref{Corollary:n=2} gives us that
\ab{
r([(1-m_1) \I + m_1\P] [(1-m_2) \I + m_2 \P]\D)
}
decreases in $m_2$ when $m_1 \in [0,1/2]$ and increases in $m_2$ when $m_1 \in [1/2,1]$, and similarly when $m_1$ and $m_2$ are interchanged.  Writing $\M(m) = (1-m) \I + m \P$, when $m_1, m_2 \in [0,1/2]$ then $r(\M(m_1) \M(m_2)\D)$ decreases in both $m_1$ and $m_2$, and when $m_1, m_2 \in [1/2, 1]$ then $r(\M(m_1) \M(m_2)\D)$ increases in both $m_1$ and $m_2$.  Setting $m_1=m_2=m$ completes the proof.
\end{proof}

Finally, we return to the question posed in the beginning.
\begin{corollary}[Partial answer to Cohen's open question]\label{Corollary:A2D}
Let $\A$ be an $n \times n$ symmetrizable nonnegative matrix and $\D$ be a positive diagonal matrix.  If all of the eigenvalues of $\A$ are positive, then $ r(\A) \, r(\A\D) \geq r(\A^2 \D)$.  If all of the non-Perron eigenvalues of $\A$ are negative, then $ r(\A) \, r(\A\D) \leq r(\A^2 \D)$.  When $\A$ is irreducible and $\D$ is nonscalar, then the above inequalities are strict.
If all of the non-Perron eigenvalues of $\A$ are zero, then $ r(\A) \, r(\A\D) = r(\A^2 \D)$.
\end{corollary}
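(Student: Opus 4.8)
The plan is to read this off from Theorem~\ref{Theorem:Main} with the choice $\B = \A$. Then $r(\B) = r(\A) \equiv r_A$, the common left and right Perron vectors are simply those of $\A$, the commutativity hypothesis is vacuous, and
\ab{
\M(m) = \A\,[(1{-}m)\, r_A\, \I + m\, \A], \qquad \M(0) = r_A\, \A, \qquad \M(1) = \A^2 .
}
Hence $r(\M(0)\D) = r_A\, r(\A\D) = r(\A)\, r(\A\D)$ and $r(\M(1)\D) = r(\A^2\D)$, so the three assertions of the corollary amount to saying that $m \mapsto r(\M(m)\D)$ is nonincreasing, nondecreasing, or constant on $[0,1]$, with strict monotonicity when $\A$ is irreducible and $\D$ is nonscalar.

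First I would dispatch the case in which $\A$ is irreducible and $\D$ is nonscalar. For $m \in [0,1)$ the factor $(1{-}m)\, r_A\, \I + m\, \A$ has strictly positive diagonal (here $r_A > 0$ since $\A$ is irreducible), so $\M(m) \geq \0$ is irreducible, and Theorem~\ref{Theorem:Main} applies at such $m$: it gives $\dfinline{r(\M(m)\D)}{m} < 0$ under hypothesis C1, $> 0$ under C2, and $= 0$ under C3. Thus $m \mapsto r(\M(m)\D)$ is strictly decreasing, strictly increasing, or constant on $[0,1)$; since this map is also continuous on the closed interval $[0,1]$ --- the spectral radius of a nonnegative matrix being an eigenvalue, it varies continuously with the entries --- comparing the values at $m=0$ and $m=1$ gives $r(\A^2\D) < r(\A)\, r(\A\D)$ in case C1, the reverse strict inequality in case C2, and equality in case C3. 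This establishes the strict claims and the equality claim.

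Next I would remove the two restrictions. If $\D = c\, \I$ is scalar then $r(\A^2\D) = c\, r(\A^2) = c\, r(\A)^2 = r(\A)\, r(\A\D)$, so equality holds and all three non-strict inequalities are immediate. If $\D$ is nonscalar but $\A$ is reducible, then in cases C2 and C3 the spectral hypotheses together with $\A \geq \0$ already force $\A$ to be irreducible --- up to the harmless adjunction of $1{\times}1$ zero diagonal blocks under C3, which changes none of the three spectral radii --- so there is nothing further to do; in case C1 I would perturb. Writing $\A = \E\, \Smh\, \E^{-1}$ with $\E$ positive diagonal and $\Smh$ symmetric as in Lemma~\ref{Lemma:1Mutation}, put $\A_\varepsilon \eqdef \A + \varepsilon\, \E\, \ev\ev\tr\, \E^{-1} = \E\,(\Smh + \varepsilon\, \ev\ev\tr)\, \E^{-1}$ for $\varepsilon > 0$. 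This $\A_\varepsilon$ is entrywise positive (hence irreducible), it is again symmetrizable because $\Smh + \varepsilon\, \ev\ev\tr$ is symmetric, and, $\ev\ev\tr$ being positive semidefinite, Weyl's inequality keeps all eigenvalues of $\A_\varepsilon$ positive. The previous step then yields $r(\A_\varepsilon)\, r(\A_\varepsilon\D) > r(\A_\varepsilon^2\, \D)$, and letting $\varepsilon \downarrow 0$ --- again by continuity of the spectral radius --- recovers $r(\A)\, r(\A\D) \geq r(\A^2\D)$.

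The substantive content lies entirely in the second step, where the conclusion is essentially immediate from Theorem~\ref{Theorem:Main}. The part I expect to require the most care is the third step: confirming that the reducible and scalar-$\D$ configurations are genuinely consistent with the non-strict (not strict) inequalities, and that the rank-one perturbation $\E\, \ev\ev\tr\, \E^{-1}$ simultaneously preserves nonnegativity, symmetrizability, and the relevant eigenvalue-sign condition.
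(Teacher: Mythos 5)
Your proposal is correct, and its core is exactly the paper's argument: set $\B=\A$ in Theorem~\ref{Theorem:Main}, observe $\M(0)=r(\A)\A$ and $\M(1)=\A^2$, read the inequalities off the sign of $\dfinline{r(\M(m)\D)}{m}$, and dispose of scalar $\D$ by the trivial identity $r(\A)\,r(\A\D)=c\,r(\A)^2=r(\A^2\D)$. (Your extra care about irreducibility of $\M(m)$ only on $[0,1)$ plus continuity at $m=1$ is a point the paper glosses over, and is welcome, since $\A^2$ can be reducible even for irreducible $\A$.) Where you genuinely diverge is the reducible case. The paper handles all three sign conditions at once by asserting that a reducible symmetrizable nonnegative matrix is a limit of symmetrizable irreducible ones whose eigenvalues retain the required signs, and then passes to the limit; it does not exhibit the approximating sequence. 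You instead split by condition: under C2 reducibility is impossible, since a reducible nonnegative matrix is permutation-similar to a block-triangular matrix whose two diagonal blocks each contribute a nonnegative eigenvalue, so some non-Perron eigenvalue is $\geq 0$; under C3 the symmetric zero pattern of a symmetrizable matrix forces a direct sum of an irreducible block and $1\times1$ zero blocks, which leaves all three spectral radii unchanged (alternatively, C3 gives $\A^2=r(\A)\A$ outright, which is even quicker); and under C1 you perturb explicitly by $\varepsilon\,\E\,\ev\ev\tr\,\E^{-1}=\E(\Smh+\varepsilon\,\ev\ev\tr)\E^{-1}$, which visibly preserves nonnegativity, symmetrizability, and (by Weyl, since $\ev\ev\tr$ is positive semidefinite) positivity of the spectrum, then let $\varepsilon\downarrow 0$. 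The trade-off: the paper's density-plus-continuity argument is shorter and uniform across C1--C3, while your construction makes the approximation concrete and in fact shows that for C2 and C3 no approximation is needed at all; both yield only the non-strict inequalities in the reducible case, as the statement requires.
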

\begin{proof}
Let $\A$ be irreducible and $\D$ nonscalar.  Apply Theorem \ref{Theorem:Main} with $\A= \B$.  Then $\M(0) = r(\A) \A$ and $\M(1) = \A^2$.  If all the eigenvalues of $\A$ are positive, then by Theorem \ref{Theorem:Main}, $\dfinline{r(\M(m)\D)}{m} < 0$, so $r(\M(0)\D) = r(\A) \, r(\A\D) > r(\A^2 \D) = r(\M(1)\D)$.  If all the non-Perron eigenvalues of $\A$ are negative, then $\dfinline{r(\M(m)\D)}{m} > 0$, so $r(\M(0)\D) = r(\A) \, r(\A\D) < r(\A^2 \D) = r(\M(1)\D)$.  If $\lambda_{Ai}=0$ for $i \in \{2, \ldots, n\}$ then $\dfinline{r(\M(m)\D)}{m} = 0$ so $r(\A) \, r(\A\D) = r(\A^2 \D)$.

Now, let $\D = c \, \I$ for $c > 0$.  Then  $r(\A) \, r(\A\D) = c \, r(\A)^2 = c\, r(\A^2)$ so equality holds.  
Let $\A$ be reducible. A reducible symmetrizable nonnegative matrix $\A$ is always the limit of some sequence of symmetrizable irreducible nonnegative matrices, for which the eigenvalues remain on the real line.  If $\lambda_{Ai}, i=2, \ldots, n$ are all negative or all positive, then they continue to be so for these perturbations of $\A$ by the continuity of the eigenvalues.  For each perturbation, the sign of $\dfinline{r(\M(m)\D)}{m}$ is maintained, but in the limit equality cannot be excluded, so only the non-strict versions of the inequalities are assured for reducible matrices.
\end{proof}

\subsection{Levinger's Theorem}
Mention should be made of a well-known special case of the general open question about variation in $r((1-m) \A + m \B)$, Levinger's Theorem \citep{Levinger:1970}, where $\B = \A\tr$.  Levinger found that $r((1-m) \A + m \A\tr)$ is nonincreasing on $m \in [0, 1/2]$ and nondecreasing on $m \in [1/2, 1]$.  This was generalized in \citet[Theorem 7]{Alpin:and:Kolotilina:1998} to $r((1-m) \A + m \C^{-1}\A\tr \C)$, where $\C$ is a positive diagonal matrix.  Fiedler \citet{Fiedler:1995:Numerical} showed that $r((1-m) \A + m \A\tr)$ is concave in $m$ for some interval with midpoint $m=1/2$.  

The relation between Levinger's Theorem and Theorem \ref {Theorem:MainAB} is that they are in a sense orthogonal, in that their conditions intersect only in the trivial case of $(\C\A)\tr = \C\A$.  Letting the two forms intersect gives $(1-m) \A_1 + m \C^{-1}\A_1\tr \C = [(1-m)\A + m \B] \D$, where $\A = \A_1 \D^{-1}$, and $\B = \C^{-1}\A_1\tr \C \D^{-1}$.  The assumption of simultaneous symmetrizability means $\A=\E \Smh_A \E^{-1}$ and $\B=\E \Smh_B \E^{-1}$, so $\A_1  = \E \Smh_A \E^{-1} \D $ and
\ab{
\B &= \C^{-1}\A_1\tr \C \D^{-1} = \C^{-1} (\E^{-1}\D  \Smh_A \E ) \C \D^{-1} = \E \Smh_B \E^{-1} \iff \\
\Smh_B &=  \E^{-1}(\C^{-1} \E^{-1}\D  \Smh_A \E  \C \D^{-1}) \E   \iff \E^{-2}\C^{-1}\D = \E^2  \C \D^{-1}    \iff  \\
 \I&= \E^2  \C \D^{-1}   \iff \Smh_B = \Smh_A \iff \A = \B =  \A_1 \D^{-1} = \C^{-1}\A_1\tr \C \D^{-1} \iff\\
\A_1 &= \C^{-1}\A_1\tr \C \iff \C \A_1 = \A_1\tr\C = (\C \A_1)\tr.
}
This ``orthogonality'' between Levinger's Theorem and Theorem \ref {Theorem:MainAB} opens the question of whether results could be obtained on a space of variation spanned by the forms of variation from Levinger's Theorem and Theorem \ref {Theorem:MainAB}, but this is not pursued here.

\begin{remark}\rm
In reviewing the literature on Levinger's Theorem, a number of overlaps are noted.  The elementary proof of Levinger's Theorem in \citet{Fiedler:1995:Numerical} defines `balanced' matrices, which is the same as `sum-symmetric' introduced in \citet{Afriat:1974} and `line-sum-symmetric' from \citet{Eaves:Hoffman:Rothblum:and:Schneider:1985}.  Lemmas 2.1 and 2.2 in \citet{Fiedler:1995:Numerical} correspond to Corollaries 3 and 5 in \citep{Eaves:Hoffman:Rothblum:and:Schneider:1985}, but the proofs are quite distinct.

A multiplicative version of Levinger's theorem is given in \citet{Alpin:and:Kolotilina:1998}, which utilizes the weighted geometric mean Lemmas 1 and 3 from \citet{Elsner:Johnson:and:DaSilva:1988}, that $r(\A^{(m)} \circ \B^{(1-m)} ) \leq r(\A)^m \, r(\B)^{1-m}$, where $[\A^{(m)}]_{ij} \equiv A_{ij}^m$ and $\A \circ \B$ is the Schur-Hadamard product.  These lemmas are contained within Nussbaum's  omnibus Theorem 1.1 \citet {Nussbaum:1986:Convexity}, as excerpted in \cite[Theorem 13]{Altenberg:2013:Sharpened}, and the proofs all rely on H\"older's Inequality.
\end{remark}


\section{Applications}

The inequalities examined here arise naturally in models of population dynamics.  Karlin derived Theorems \ref{Theorem:Karlin5.2} \citep[Theorem 5.2]{Karlin:1982} and \ref{Theorem:Karlin5.1} \citep[Theorem 5.1]{Karlin:1982} in order to analyze the protection of genetic diversity in a subdivided population where $\M$ is the matrix of dispersal probabilities between patches.  He wished to establish a partial ordering of stochastic matrices $\M$ with respect to their levels of `mixing' over which $r(\M\D)$ decreases with increased mixing. 

Variation in $(1-m)\I+m\P$ over $m$ represents variation in the incidence of a single transforming processes (such as mutation, recombination, or dispersal) that scales all transitions between states equally.  However, many natural systems have multiple transforming processes that act simultaneously, in which case the variation with respect to a single one of these processes generally takes the form $(1-m)\Q+m\P$ where $\P$ and $\Q$ are stochastic matrices.  Karlin's Theorem 5.2 does not apply for general $\Q \neq \I$.  The motivation to develop Theorem \ref {Theorem:Multivariate} \citep[Theorem 2]{Altenberg:2011:Mutation}, below, was to extend Karlin's Theorem 5.2 to processes with multiple transforming events.  

An open problem posed in \citet{Altenberg:2004:Open} and \citet{Altenberg:2012:Resolvent-Positive} is the general characterization of the matrices $\Q$, $\P$, and $\D$ such that $r([(1-m)\Q+m\P]\D)$ strictly decreases in $m$.   Theorem \ref{Theorem:Departure} \citep[Theorem 33]{Altenberg:2012:Dispersal} goes part way toward this characterization. 

\subsection{Kronecker Products}
A notable class of matrices that exhibit the commuting property required for Theorems \ref{Theorem:Karlin5.1}, \ref{Theorem:Departure}, \ref {Theorem:SpectralRadius}, and \ref{Theorem:Main} is the Kronecker product of powers of matrices.  Define a set of square matrices
\ab{
\Cc \eqdef \{\A_1, \A_2, \ldots, \A_L\},
}
where each $\A_i$ is an $n_i \times n_i$ matrix.  Define
\an{\label{eq:OxA}
\M(\tv) \eqdef \Ox_{i=1}^L \A_i^{t_i} 
= \A_1^{t_1} \ox \A_2^{t_2} \ox \cdots \ox \A_L^{t_L},
}
where $\ox$ is the Kronecker product (a.k.a.\ tensor product), $t_i \in \Naturals_0 \eqdef \{0, 1, 2, \ldots \}$, and $\tv \in \Naturals_0^{\ L}$.  Now define the family of such products:
\ab{
\Fc(\Cc) = \left \{ \Ox_{i=1}^L \A_i^{t_i} \suchthat t_i \in \{0, 1, 2, \ldots \} \right\}.
}
Clearly, any two members of $\Fc(\Cc)$ commute, because for any $\p, \q \in \Naturals_0^{\ L}$, 
\ab{
\M(\p) \, \M(\q) = \Ox_{i=1}^L \A_i^{p_i + q_i}= \M(\q) \, \M(\p).
}

Products of the form $\Ox_{i=1}^L \A_i^{t_i}$ arise in multivariate Markov chains for which each variate $X_i$ constitutes an independent Markov chain with transition matrix $\A_i$.  The joint Markov process is exemplified be the transmission of information in a string of $L$ symbols where transmission errors occur independently for each symbol.   Such a process includes the genetic transmission of DNA or RNA sequences with independent mutations at each site.  Under mitosis, the genome replicates approximately according to a transition matrix for a string of symbols with independent transmission errors at each site $i$:
\ab{
\M_\m \eqdef \bigotimes_{i=1}^L \left[ (1-m_i) \I_{i} + m_i \P_{i} \right  ]
=
\A[ (1-m_k) \I + m_k  \B],
}
where $m_i $ is the probability of a transforming event at site $i$, and $\P_{i}$ is the transition matrix for site $i$ given that a transforming event has occurred there.  The form $\A[ (1-m_k) \I + m_k  \B]$ is provided to show the relationship to Theorem \ref{Theorem:Main}, where $k$ may be any choice in $\{1, \ldots, L\}$, with $\A_i = (1-m_i) \I_{i} + m_i \P_{i}$ and
\an{
\A &=  \A_1 \ox \A_2 \ox \cdots \ox \A_{k-1} \ox \  \I_{k} \, \ox \A_{k+1} \ox \cdots \ox \A_L , \label{eq:Aotimes}\\
\B &= \,\,  \I_{1} \,\ox \,  \I_{2} \, \ox  \cdots \ox\  \I_{k-1} \, \ox  \P_k \ox \ \I_{k+1} \, \ox \cdots \ox \  \I_{L}.\label{eq:Botimes}
}
However, both $\A$ and $\B$ in \eqref{eq:Aotimes}  and \eqref{eq:Botimes} are reducible due to the $\I$ terms, and this somewhat alters Theorem \ref {Theorem:Main}'s condition on $\D$ for strict monotonicity of spectral radius.  This condition is seen in \eqref {eq:Ds} in the following theorem.

\begin{theorem}{\pr{\cite[Theorem 2]{Altenberg:2011:Mutation}}}
\label{Theorem:Multivariate}
Consider the stochastic matrix
\an{
\label{eq:Mmuv}
\M_\m \eqdef \bigotimes_{\kappa =1}^L \left[ (1-m_\kappa ) \I_{\kappa } + m_\kappa \P_{\kappa } \right  ],
}
where each $\P_{\kappa }$ is an $n_\kappa \times n_\kappa $ transition matrix for a reversible aperiodic Markov chain, $\I_{\kappa }$ the $n_\kappa \times n_\kappa $ identity matrix, $L \geq 2$, and $\m \in (0, 1/2)^L$.   Let $\D$ be a positive $N \times N$ diagonal matrix, where $N\eqdef\prod_{\kappa =1}^L n_\kappa $.

Then for every point $\m \in (0, 1/2)^L$, the spectral radius of 
\[
\M_\m \D = \left\{ \bigotimes_{\kappa =1}^L \left[ (1-m_\kappa ) \I_{\kappa } + m_\kappa \P_{\kappa } \right ] \right\} \D
\]
is non-increasing in each $m_\kappa $.

If diagonal entries
\an{\label{eq:Ds}
 D_{\displaystyle i_1 \cdots i_\kappa \cdots i_L} , \ D_{\displaystyle i_1 \cdots i_\kappa ' \cdots i_L}
}
differ for at least one pair $i_\kappa , i_\kappa ' \in \{1, \ldots, n_\kappa \}$, for some $ i_1 \in \{1, \ldots, n_1\}$,  $\ldots$, $i_{\kappa -1} \in \{1, \ldots, n_ {\kappa -1} \}$, $i_{\kappa +1} \in \{1, \ldots, n_ {\kappa +1} \}$, $\ldots$, $i_L \in \{1, \ldots, n_L\}$,  
then
\ab{
\pmu{r(\M_\m \D)}{\kappa } < 0 .
}
\end{theorem}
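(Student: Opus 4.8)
The plan is to realize $\M_\m$ in the form to which Theorem~\ref{Theorem:Main} applies and then simply read off the derivative. Fix the coordinate and write $\kappa=k$, and put $\A$ and $\B$ as in \eqref{eq:Aotimes}--\eqref{eq:Botimes}, so that $\M_\m = \A[(1{-}m_\kappa)\I + m_\kappa\B]$ with $r(\B)=1$. The one obstruction to be worked around is that these $\A$ and $\B$ are reducible (because of the $\I$-factors), so Lemma~\ref{Lemma:CanonicalFormAB} is not available verbatim; instead I would write down the simultaneous canonical form explicitly from the Kronecker structure. Each $\A_i=(1{-}m_i)\I_i + m_i\P_i$ is a polynomial in $\P_i$ and hence is diagonalized by the same similarity as $\P_i$: reversibility of the $i$-th chain gives $\P_i = \E_i\K_i\Lam_{P_i}\K_i\tr\E_i^{-1}$ with $\E_i$ positive diagonal and $\K_i$ orthogonal, and then $\A_i = \E_i\K_i\Lam_{A_i}\K_i\tr\E_i^{-1}$ with $(\Lam_{A_i})_{jj}=(1{-}m_i)+m_i(\Lam_{P_i})_{jj}$. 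Taking $\E:=\Ox_i\E_i$ (positive diagonal) and $\K:=\Ox_i\K_i$ (orthogonal) yields $\A=\E\K\Lam_A\K\tr\E^{-1}$ and $\B=\E\K\Lam_B\K\tr\E^{-1}$ with $\Lam_A=\big(\Ox_{i\neq\kappa}\Lam_{A_i}\big)\ox\I_\kappa$ and $\Lam_B=\big(\Ox_{i\neq\kappa}\I_i\big)\ox\Lam_{P_\kappa}$ (after reordering so that $\kappa$ is the last factor). Since each $\A_i$ is irreducible with a positive diagonal it is primitive, hence $\M_\m=\Ox_i\A_i$ is primitive and $\M_\m\D$ is irreducible; therefore the Rayleigh--Ritz/sum-of-squares computation in the proof of Theorem~\ref{Theorem:SpectralRadius} and the derivative identity \eqref{eq:y2} go through with this $\E,\K$ in place of those from Lemma~\ref{Lemma:CanonicalFormAB}, giving
\[
\pmu{r(\M_\m\D)}{\kappa} \;=\; \sum_{i=1}^{N}\lambda_{Ai}\,(\lambda_{Bi}-1)\,y_i(\m)^2 ,
\]
where $\y(\m)=\K\tr\D^{1/2}\xvh(\m)$ and $\xvh(\m)>\0$ is the Perron vector of $\S_m=\D^{1/2}\K\,\Lam_A[(1{-}m_\kappa)\I+m_\kappa\Lam_B]\,\K\tr\D^{1/2}$.

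Next I would verify the two sign facts. Because $m_i\in(0,1/2)$, every eigenvalue of $\A_i$ equals $(1{-}m_i)+m_i\mu$ for a real eigenvalue $\mu\in[-1,1]$ of the stochastic matrix $\P_i$, hence is at least $1-2m_i>0$; consequently every eigenvalue $\lambda_{Ai}$ of $\A$, being a product of such numbers together with a $1$ from the $\kappa$-slot, is positive. The eigenvalues of $\B$ are exactly those of $\P_\kappa$, all real with $\lambda_{Bi}\le 1$, and $\lambda_{Bi}=1$ occurs precisely on the $N/n_\kappa$ indices whose $\kappa$-component selects the Perron eigenvalue of $\P_\kappa$. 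Hence every summand above is $\le 0$, so $r(\M_\m\D)$ is non-increasing in each $m_\kappa$ on $(0,1/2)^L$.

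For the strict inequality under \eqref{eq:Ds} I would argue by contradiction, following point~3 of the proof of Theorem~\ref{Theorem:Main} but with the one-dimensional forbidden subspace there replaced by $\Vc:=\ker(\Lam_B-\I)=\Reals^{N/n_\kappa}\ox\ker(\Lam_{P_\kappa}-\I_\kappa)$, which has dimension $N/n_\kappa$ because the Perron eigenvalue of $\P_\kappa$ is simple. If $\pmu{r(\M_\m\D)}{\kappa}=0$ at some $\m$, then, all summands being $\le 0$, $y_i(\m)=0$ for every $i$ with $\lambda_{Bi}<1$, i.e.\ $\y(\m)\in\Vc$. On $\Vc$ the diagonal matrix $\Lam_A[(1{-}m_\kappa)\I+m_\kappa\Lam_B]$ acts as $\big(\Ox_{i\neq\kappa}\Lam_{A_i}\big)\ox\I_\kappa$, so propagating the eigen-relation $\S_m\xvh(\m)=r(\M_\m\D)\,\xvh(\m)$ through $\K$ and the diagonal factors gives $\D^{1/2}\xvh(\m)=\zv\ox\wv_\kappa$ and then $\D(\zv'\ox\wv_\kappa)=r(\M_\m\D)\,(\zv\ox\wv_\kappa)$, where $\wv_\kappa>\0$ is the symmetrized Perron vector of $\P_\kappa$ and $\zv,\zv'$ are vectors over the remaining coordinates. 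Comparing entries (and using $\xvh(\m)>\0$ to exclude vanishing components of $\zv$, hence of $\zv'$) forces $D_{i_1\cdots i_\kappa\cdots i_L}$ to be independent of $i_\kappa$, contradicting \eqref{eq:Ds}; therefore $\pmu{r(\M_\m\D)}{\kappa}<0$.

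The hard part will be the bookkeeping in this last step: identifying $\ker(\Lam_B-\I)$ with ``constant along the $\kappa$-th tensor factor'' and carrying the eigen-relation $\S_m\xvh(\m)=r(\M_\m\D)\,\xvh(\m)$ cleanly through the several Kronecker-factored orthogonal and diagonal changes of basis so that the conclusion about $\D$ falls out. Everything else is a direct specialization of Theorems~\ref{Theorem:SpectralRadius} and~\ref{Theorem:Main}, with the positivity of the spectrum of $\A$ coming entirely from the restriction $m_i<1/2$; in particular no separate treatment of scalar $\D$ is needed, since the derivative formula above holds for every positive diagonal $\D$.
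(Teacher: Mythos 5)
Your proposal is correct, but it is worth noting that this paper never proves Theorem \ref{Theorem:Multivariate} at all: it is imported verbatim from \cite[Theorem 2]{Altenberg:2011:Mutation}, and the present paper only uses it (e.g.\ in Proposition \ref{Prop:MA}, which extends it to general symmetrizable nonnegative factors, and in the remark that \eqref{eq:Pequiv} gives an alternative route from Theorem \ref{Theorem:Departure} to Theorem \ref{Theorem:Main}). What you have done is the reverse derivation: a self-contained specialization of Theorems \ref{Theorem:SpectralRadius} and \ref{Theorem:Main} to the Kronecker-structured case, and the two places where you depart from a verbatim citation of those theorems are exactly the right ones. First, since $\A$ and $\B$ in \eqref{eq:Aotimes}--\eqref{eq:Botimes} are reducible, Lemma \ref{Lemma:CanonicalFormAB} does not apply, and you correctly replace it by the explicit simultaneous form $\E=\Ox_i \E_i$, $\K=\Ox_i \K_i$ coming from reversibility of each $\P_i$; the sum-of-squares and derivative identities \eqref{eq:rho-y2}, \eqref{eq:y2} then only need irreducibility of $\M_\m\D$, which you secure via primitivity of each factor ($m_i\in(0,1/2)$ gives positive diagonals) and hence of the tensor product. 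Second, the strict-inequality step cannot invoke point 3 of the proof of Theorem \ref{Theorem:Main} (whose conclusion is ``$\D$ scalar''), because the eigenvalue $r_B=1$ of $\B$ now has multiplicity $N/n_\kappa$; your replacement --- forcing $\y(\m)$ into $\ker(\Lam_B-\I)$, pushing the eigen-relation for $\S_m$ back through $\K$ to get $\D(\zv'\ox\wv_\kappa)=r(\M_\m\D)(\zv\ox\wv_\kappa)$ with $\wv_\kappa>\0$ and $\zv$ nowhere zero, and concluding that $\D$ is constant along the $\kappa$-th index --- is precisely the argument that recovers the weaker hypothesis \eqref{eq:Ds} in place of nonscalarity of $\D$, and it is sound (the positivity of all $\lambda_{Ai}$ from $m_i<1/2$, and the simplicity of the Perron root of $\P_\kappa$, are used exactly where needed). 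So your route is a legitimate alternative to citing the 2011 proof, and it has the merit of making visible why \eqref{eq:Ds} is the correct strictness condition; the cost is the tensor-index bookkeeping you flag, plus the (standard, and intended) reading that each $\P_\kappa$ is irreducible as well as aperiodic, which your primitivity step does rely on.
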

\begin{remark}\rm
The condition on the diagonal entries \eqref{eq:Ds} can be expressed simply in the cases $\kappa =1$ and $\kappa =L$, respectively, as the requirement that $\D \neq c\,\I_1 \ox \D'$ and $\D \neq \D' \ox c\,\I_L $ for any $c \in \Reals$ and any $\D'$.  For $\kappa \in \{2, \ldots, L-1\}$ similar expressions can be given by employing permutations of the tensor indices.
\end{remark}

Theorem \ref{Theorem:Multivariate} was obtained to characterize the effect of mutation rates on a clonal population, or on a gene that modifies mutation rates in a non-recombining genome.  This theorem shows that the asymptotic growth rate of an infinite population of types $\{(i_1, i_2, \ldots, i_L)\}$ is a strictly decreasing function of each mutation rate $m_ \kappa $ when the growth rates $D_i$ in \eqref{eq:Ds} differ, and non-increasing otherwise.  All the eigenvalues of $\M_\m$ are positive, as in condition C1 in Theorem \ref{Theorem:Main}, due to the assumption $m_\kappa < 1/2$ for $\kappa \in \{1, \ldots, L\}$.

The asymptotic growth rate of a quasispecies \citep{Eigen:and:Schuster:1977} at a mutation-selection balance is thus shown by Theorem \ref {Theorem:Multivariate} to be a decreasing function of the mutation rate for each base pair, a result not previously obtained with this level of generality in the multilocus mutation parameters, mutation matrices, and multilocus selection coefficients.  As a practical matter, however, in genetics $L$ may be very large, for example $L\approx 6 \times 10^{9}$ for the human genome.  For such large $L$, populations cannot exhibit the Perron vector as a stationary distribution since the population size is infinitesimal compared to the genome space of $n=4^L \approx 10^{4 \times 10^9}$.  However, in large populations models that examine a small-$L$ approximation or portion of the full genome, the Perron vector may become relevant as the stationary distribution under selection and mutation.

\begin{proposition}\label{Prop:MA}
Theorem \ref {Theorem:Multivariate} extends to general symmetrizable irreducible nonnegative matrices
\ab{
\M_\m =  \bigotimes_{\kappa =1}^L \left[ (1-m_\kappa ) r(\A_{\kappa }) \, \I_{\kappa } + m_\kappa \A_{\kappa } \right ] ,
}
where each $\A_\kappa $ is a symmetrizable irreducible nonnegative $n_\kappa \times n_\kappa $ matrix.
\end{proposition}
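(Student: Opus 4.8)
The plan is to recast $\M_\m$ in the form handled by Theorems~\ref{Theorem:SpectralRadius} and~\ref{Theorem:Main}, and then to re-run that machinery with an \emph{explicit} tensor-product canonical form, since the reducibility of the natural factors blocks a black-box appeal to Theorem~\ref{Theorem:Main}. Fix $\kappa\in\{1,\ldots,L\}$ and abbreviate $\A_i^{\star}\eqdef(1{-}m_i)\,r(\A_i)\,\I_i+m_i\A_i$. Let $\A$ be the Kronecker product $\A_1^{\star}\ox\cdots\ox\A_L^{\star}$ with the $\kappa$-th factor replaced by $\I_{n_\kappa}$, and let $\B$ be the Kronecker product in which every factor is an identity except the $\kappa$-th, which is $\A_\kappa$; expanding the $\kappa$-th tensor slot gives $\M_\m=\A\,[(1{-}m_\kappa)\,r(\A_\kappa)\,\I+m_\kappa\B]$, with $r(\B)=r(\A_\kappa)$. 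These $\A$ and $\B$ are nonnegative, they commute (they act nontrivially on complementary sets of slots), and they are simultaneously symmetrizable: since $\A_i=\DL^{(i)}\Sm_i\DR^{(i)}$ forces $\A_i^{\star}=\DL^{(i)}[(1{-}m_i)r(\A_i)(\DL^{(i)}\DR^{(i)})^{-1}+m_i\Sm_i]\DR^{(i)}$ with the bracket symmetric, Lemma~\ref{Lemma:1Mutation} yields $\A=\E\Smh_A\E^{-1}$ and $\B=\E\Smh_B\E^{-1}$ with $\E=\E_1\ox\cdots\ox\E_L$, $\E_i=(\DL^{(i)})^{1/2}(\DR^{(i)})^{-1/2}$ (the identity slots contributing trivially). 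Finally $\M_\m$ is a Kronecker product of the matrices $\A_\kappa^{\star}$, each primitive (positive diagonal, and irreducible because $\A_\kappa$ is), so $\M_\m$ is primitive and $\M_\m\D$ is irreducible with a unique positive Perron vector $\vv(m)$.

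The obstruction to citing Theorem~\ref{Theorem:Main} verbatim is that $\A$ and $\B$ are themselves \emph{reducible} (because of the $\I$ factors), so their Perron roots are degenerate and the identification of $\E$ through Perron vectors in Lemma~\ref{Lemma:CanonicalFormAB} is unavailable. I would bypass this by writing down the joint canonical form by hand: put $\K=\K_1\ox\cdots\ox\K_L$, where $\K_i$ is the orthogonal matrix diagonalizing the symmetric companion $\Smh_i$ of $\A_i$ from Lemma~\ref{Lemma:1Mutation} (note $\I_{n_\kappa}=\K_\kappa\I\K_\kappa\tr$ is diagonalized as well). Then $\A=\E\K\Lam_A\K\tr\E^{-1}$ and $\B=\E\K\Lam_B\K\tr\E^{-1}$ with $\Lam_A,\Lam_B$ the (product, resp.\ single-slot) diagonal eigenvalue matrices, and since only the irreducibility of $\M_\m\D$ --- not of $\A$ or $\B$ --- is actually used in Theorem~\ref{Theorem:SpectralRadius}, the entire derivation through the derivative formula \eqref{eq:y2} carries over with this concrete $\E$ and $\K$. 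Indexing coordinates by multi-indices $(j_1,\ldots,j_L)$ and writing $\mu_{i,j}=(1{-}m_i)\,r(\A_i)+m_i\lambda_{i,j}$ for the eigenvalues of $\A_i^{\star}$ (so $\mu_{i,1}=r(\A_i)$), formula \eqref{eq:y2} reads
\[
\pf{r(\M_\m\D)}{m_\kappa}=\sum_{(j_1,\ldots,j_L)}\Bigl(\prod_{i\neq\kappa}\mu_{i,j_i}\Bigr)\bigl(\lambda_{\kappa,j_\kappa}-r(\A_\kappa)\bigr)\,y_{(j_1,\ldots,j_L)}(m)^2 .
\]

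For $\m\in(0,1/2)^L$ one has $\mu_{i,j}\ge(1{-}2m_i)\,r(\A_i)>0$ for all $i,j$, so every coefficient $\prod_{i\neq\kappa}\mu_{i,j_i}$ is positive; since $\lambda_{\kappa,j_\kappa}\le r(\A_\kappa)$ always, every summand is $\le0$, which gives the non-increasing conclusion. For strictness, the summands with $j_\kappa=1$ vanish, while those with $j_\kappa\ge2$ are strictly negative whenever $y_{(j_1,\ldots,j_L)}(m)\neq0$, because $\A_\kappa$ irreducible makes $r(\A_\kappa)$ simple, so $\lambda_{\kappa,j_\kappa}<r(\A_\kappa)$; thus it suffices to rule out $y_{(j_1,\ldots,j_L)}(m)=0$ for \emph{every} multi-index with $j_\kappa\ge2$. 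Mimicking the argument in the proof of Theorem~\ref{Theorem:Main} that derives a contradiction from $y_i(m)=0$ for all non-Perron $i$ --- multiply $\y(m)$ on the left by the invertible $\E\K$ and use $[\E_i\K_i]_1=\vv(\A_i)$ --- such vanishing forces $\D\,\vv(m)=\ch(m)^{-1}\E\K\,\y(m)$ to factor, in the $\kappa$-th slot, as $\g\ox\vv(\A_\kappa)$; then the Perron relation $r(\M_\m\D)\,\vv(m)=\M_\m\D\,\vv(m)$ together with $\A_\kappa^{\star}\vv(\A_\kappa)=r(\A_\kappa)\,\vv(\A_\kappa)$ forces $\vv(m)$ itself to factor in the $\kappa$-th slot as $\h\ox\vv(\A_\kappa)$, whence the entrywise quotient of $\D\,\vv(m)$ by $\vv(m)$ --- namely the diagonal of $\D$ --- is independent of the $\kappa$-th index, exactly the negation of \eqref{eq:Ds}. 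The contrapositive gives $\pf{r(\M_\m\D)}{m_\kappa}<0$ whenever \eqref{eq:Ds} holds. I expect this last step --- the Kronecker-index bookkeeping yielding the forced slot-factorization of $\vv(m)$ --- to be the main obstacle; it is the tensor analogue of, but more delicate than, the nonscalarity argument ($\D\neq c\,\I$) in the proof of Theorem~\ref{Theorem:Main}.
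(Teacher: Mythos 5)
Your argument is essentially correct, but it takes a genuinely different route from the paper. The paper's proof is a short reduction: for each factor it sets $\P_\kappa = \frac{1}{r(\A_\kappa)}\D_{\uv_{A\kappa}}\A_\kappa\D_{\uv_{A\kappa}}^{-1}$, checks that this is a symmetrizable irreducible stochastic matrix, and then factors the diagonal conjugations and the scalars $r(\A_\kappa)$ out of the spectral radius to get $r(\M_\m\D)=\prod_\kappa r(\A_\kappa)\, r(\M'_\m\D)$ with $\M'_\m=\bigotimes_\kappa[(1-m_\kappa)\I_\kappa+m_\kappa\P_\kappa]$, so the sign of $\pfinline{r(\M_\m\D)}{m_\kappa}$ is inherited verbatim from Theorem \ref{Theorem:Multivariate}, including the strictness condition \eqref{eq:Ds}. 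You instead re-run the sum-of-squares machinery of Theorems \ref{Theorem:SpectralRadius} and \ref{Theorem:Main} with an explicit tensor canonical form $\E=\E_1\ox\cdots\ox\E_L$, $\K=\K_1\ox\cdots\ox\K_L$, which amounts to re-proving Theorem \ref{Theorem:Multivariate} itself in the general symmetrizable nonnegative setting rather than reducing to it. Your key observations are sound: only irreducibility (indeed primitivity) of $\M_\m$, not of the reducible factors $\A$ and $\B$, is needed once $\E$ and $\K$ are given explicitly; the multi-index derivative formula with coefficients $\prod_{i\neq\kappa}\mu_{i,j_i}>0$ for $\m\in(0,1/2)^L$ gives the non-increasing conclusion; and your contradiction argument (vanishing of all $y_{(j)}$ with $j_\kappa\geq 2$ forces $\D\vv(m)$ and then, via the Perron relation and $\A_\kappa^\star\vv(\A_\kappa)=r(\A_\kappa)\vv(\A_\kappa)$, $\vv(m)$ itself to factor in slot $\kappa$, so the diagonal of $\D$ is independent of the $\kappa$-th index) is exactly the right tensor analogue of the nonscalarity step and correctly yields the contrapositive of \eqref{eq:Ds}; the only blemishes are routine (e.g.\ the placement of $\ch(m)$ versus $\ch(m)^{-1}$ in $\D\vv(m)=\ch(m)\,\E\K\y(m)$). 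What the two approaches buy: the paper's reduction is a few lines and leans on the previously published Theorem \ref{Theorem:Multivariate} (whose proof is not reproduced here), while your direct derivation is longer but self-contained and makes the strict-monotonicity mechanism, including the slot-factorization obstruction, explicit; it is in spirit the reverse of the paper's own remark that the identification \eqref{eq:Pequiv} lets one sidestep the canonical-form lemmas rather than rebuild them.
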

\begin{proof}
For any given $\A \in \{\A_\kappa\}$, let $\uv_A\tr$ be its left Perron vector, and define
\an{\label{eq:Pequiv}
\P = \frac{1}{r(\A)} \D_{\uv_A} \A \,  \D_{\uv_A}^{-1}.
}
Then $\P$ is a symmetrizable irreducible stochastic matrix:
\enumlist{
\item  $\P \geq \0$ since $r(\A) > 0$, and $\uv_A > \0$.
\item $\P$ is stochastic, since
\ab{
\ev\tr \left(\frac{1}{r(\A)} \D_{\uv_A} \A  \D_{\uv_A}^{-1} \right) = \frac{1}{r(\A)} \uv_A \tr \A \D_{\uv_A}^{-1} = \frac {r(\A)}{r(\A)} \uv_A\tr \D_{\uv_A}^{-1} = \ev\tr .
}
\item $\P$ is symmetrizable:
\ab{
\P & = \frac{1}{r(\A)} \D_{\uv_A} \A  \D_{\uv_A}^{-1}
=  \frac{1}{r(\A)} \D_{\uv_{A}} \DL \S \DR  \D_{\uv_A}^{-1} \notag
= \DL' \S \DR', \notag
}
where $\DL' = \frac{1}{r(\A)} \D_{\uv_A} \DL$, $\DR' = \DR  \D_{\uv_A}^{-1}$. 
\item $\P$ is irreducible since $[\A^t]_{ij} > 0$ if and only if 
\ab{
[\P^t]_{ij} = \frac{1}{r(\A)^t} \ u_{Ai}\  [\A^t]_{ij} \  u_{Aj}^{-1} > 0.
}
}
The spectral radius expressions in terms of $\A_\kappa $ and $\P_\kappa $ are now shown to be equivalent:
\ab{
r(\M_\m \D) 
&
= 
r \biggl( \bigotimes_{\kappa =1}^L \left[ (1-m_\kappa ) \, r(\A_{\kappa }) \, \I_{\kappa } + m_\kappa \A_{\kappa } \right ] \D \biggr)
\\
&
=
\prod_{\iloc =1}^L r(\A_ \iloc ) \ r\biggl( \bigotimes_{\kappa =1}^L \left[ (1-m_\kappa )  \I_{\kappa } + m_\kappa \frac{1}{r(\A_{\kappa })} \A_{\kappa } \right ] \D\biggr)
\\
&
=
\prod_{\iloc =1}^L r(\A_ \iloc ) \ r\biggl( \bigotimes_{\kappa =1}^L \left\{  \D_{\uv_{A\kappa }} [(1-m_\kappa )  \I_{\kappa } + m_\kappa \frac{1}{r(\A_{\kappa })} \A_{\kappa }] \D_{\uv_{A\kappa }}^{-1} \right\}  \D\biggr)
\\
&
=
\prod_{\iloc =1}^L r(\A_ \iloc ) \ r\biggl( \bigotimes_{\kappa =1}^L \left [(1-m_\kappa )  \I_{\kappa } + m_\kappa \frac{1}{r(\A_{\kappa })}\D_{\uv_{A\kappa }}  \A_{\kappa }\D_{\uv_{A\kappa }}^{-1}  \right]  \D\biggr)
\\
&
=
\prod_{\iloc =1}^L r(\A_ \iloc ) \ r\biggl( \bigotimes_{\kappa =1}^L \left [(1-m_\kappa )  \I_{\kappa } + m_\kappa \P_\kappa  \right]  \D\biggr)
\\
&
=
\prod_{\iloc =1}^L r(\A_ \iloc ) \ r( \M'_\m  \D),
}
where $\M'_\m \eqdef \bigotimes_{\kappa =1}^L \left [(1-m_\kappa )  \I_{\kappa } + m_\kappa \P_\kappa  \right]$, and each $\D_{\uv_{A\kappa }}$ is the diagonal matrix of the right Perron vector of $\A_\kappa$.  Therefore 
\an{\label{eq:MDMD}
\pf{}{m_ \kappa} r(\M_\m \D) = \prod_{\iloc =1}^L r(\A_\iloc ) \ \pf{}{m_ \kappa} r(\M'_\m \D).
}
Theorem \ref{Theorem:Multivariate}, being applicable to the right hand side of \eqref{eq:MDMD}, is thus extended to the left hand side composed of general symmetrizable irreducible nonnegative matrices.
\end{proof}

\begin{remark}\rm
The identification $\P = \frac{1}{r(\A)} \D_{\uv_A} \A  \D_{\uv_A}^{-1}$ \eqref{eq:Pequiv} used in Proposition \ref {Prop:MA} also provides another route to extend Theorem \ref{Theorem:Departure} to Theorem \ref{Theorem:Main}, sidestepping Lemmas \ref{Lemma:Canonical} and \ref{Lemma:CanonicalFormAB}, Theorem \ref{Theorem:SpectralRadius}, and the proof of Theorem \ref{Theorem:Main}.  But these latter results are of interest in their own right and so are not omitted.
\end{remark}

\subsection{Temporal Properties}
Theorem \ref{Theorem:Departure} was obtained to generalize a model by McNamara and Dall \citep{McNamara:and:Dall:2011} of a population that disperses in a field of sites undergoing random change between two environments, where each environment produces its own rate of population growth.  In the generalization of \citep{McNamara:and:Dall:2011} to any number of environments \citep{Altenberg:2012:Dispersal}, environmental change is modeled as a reversible Markov chain with transition matrix $\P$, and $\Q = \lim_{t \goesto \infty} \P^t$.  The condition from Theorem \ref{Theorem:Departure} that $\P$ have all negative non-Perron eigenvalues means that the environment changes almost every time increment, whereas positive eigenvalues correspond to more moderate change.  

The correspondence originally discovered by McNamara and Dall \citep{McNamara:and:Dall:2011} was between the duration of each environment --- its \emph{sojourn time} \citep{Halmos:1949:Measurable} --- and whether natural selection was for or against dispersal.  The direction of evolution of dispersal and the sojourn times of the environment are, in the generalization of their model, both determined by conditions C1 and C2 on the signs of the non-Perron eigenvalues of the environmental change matrix \citep[Theorem 33]{Altenberg:2012:Dispersal}.  More specifically, what is determined by conditions C1 and C2 is an inequality on the \emph{harmonic mean} of the expected sojourn times of the Markov chain.  The inequality derives from a remarkably little-known identity.
\begin{lemma}[Harmonic Mean of Sojourn Times \pr{\citep[Lemma 32]{Altenberg:2012:Dispersal}}]\label{Lemma:HarmonicMean}
For a Markov chain with transition matrix $\P$, let $\tau_i(\P)$ be the expected {sojourn time} in $i$ (the mean duration of state $i$), and let $\{\lambda_i(\P)\}$ be the eigenvalues of $\P$.  Let $\EA$ and $\EH$ represent the unweighted arithmetic and harmonic means, respectively.

These are related by the following identities:
\an{
\EH(\tau_i (\P) ) \, \bigl(1-\EA(\lambda_i(\P)\bigr)  &= 1, \label{eq:HarmonicMean}
\intertext{or equivalently}
\EA(\lambda_i(\P) ) + \frac{1}{\EH(\tau_i (\P))} &=  1. \label{eq:HarmonicMeanAlt}
}
\end{lemma}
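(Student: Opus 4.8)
The plan is to collapse both factors on the left of \eqref{eq:HarmonicMean} onto the trace of $\P$. The first step is the elementary observation identifying the sojourn time. For a discrete-time Markov chain, the sojourn (holding) time in state $i$ — the number of consecutive steps the chain remains in $i$ after entering it — is geometrically distributed: at each step the chain stays in $i$ with probability $P_{ii}$ and leaves with probability $1 - P_{ii}$, so this duration equals $k$ with probability $P_{ii}^{\,k-1}(1 - P_{ii})$ for $k = 1, 2, \ldots$, and hence
\[
\tau_i(\P) = \frac{1}{1 - P_{ii}}, \qquad\text{equivalently}\qquad \frac{1}{\tau_i(\P)} = 1 - P_{ii}.
\]
(Here $n$ denotes the number of states; one needs $P_{ii} < 1$ for each $i$, which holds for the irreducible ergodic chains to which the lemma is applied, else $\tau_i = \infty$.)

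Next I would evaluate the two unweighted means. By definition of the harmonic mean and the displayed identity,
\[
\EH(\tau_i(\P)) = \frac{n}{\sum_{i=1}^n \tau_i(\P)^{-1}} = \frac{n}{\sum_{i=1}^n (1 - P_{ii})} = \frac{n}{\,n - \operatorname{tr}(\P)\,}.
\]
For the eigenvalues I would invoke the standard trace identity $\sum_{i=1}^n \lambda_i(\P) = \operatorname{tr}(\P)$, so that
\[
1 - \EA(\lambda_i(\P)) = 1 - \frac{1}{n}\sum_{i=1}^n \lambda_i(\P) = \frac{\,n - \operatorname{tr}(\P)\,}{n}.
\]
Multiplying the last two displays gives $\EH(\tau_i(\P))\,\bigl(1 - \EA(\lambda_i(\P))\bigr) = 1$, which is \eqref{eq:HarmonicMean}; dividing \eqref{eq:HarmonicMean} through by $\EH(\tau_i(\P))$ and rearranging yields the equivalent form \eqref{eq:HarmonicMeanAlt}.

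There is no genuine obstacle in this argument: it is just the geometric-holding-time computation, the definitions of the arithmetic and harmonic means, and $\operatorname{tr}(\P) = \sum_i \lambda_i(\P)$. The only point meriting care is the first step, namely pinning down that ``sojourn time'' means the duration of a single visit (counted from entry to exit) rather than, say, a cumulative occupation time, and checking the geometric law; once $\tau_i(\P) = (1 - P_{ii})^{-1}$ is in hand, the identity is forced by the matching of $\operatorname{tr}(\P)$ through $P_{ii} = 1 - \tau_i(\P)^{-1}$ on one side and through $\lambda_i(\P)$ on the other.
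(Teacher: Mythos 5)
Your proof is correct and follows essentially the same route the paper (and its cited source) uses: the geometric holding-time identity $\tau_i(\P) = 1/(1-P_{ii})$, the evaluation $\EH(\tau_i(\P)) = n/(n - \operatorname{tr}(\P))$, and the trace identity $\sum_i \lambda_i(\P) = \operatorname{tr}(\P)$, exactly as in the discussion surrounding the lemma and the proof of the subsequent theorem. No gaps; your care about requiring $P_{ii} < 1$ matches the irreducibility assumption under which the lemma is applied.
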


I should qualify ``little known'' --- a version of \eqref{eq:HarmonicMean} is well-known within the field of research  on social mobility, but no reference to it outside this community appears evident.  The identity arises in Shorrock's \citep{Shorrocks:1978} social mobility index
\ab{
\hat{M}(\P) 
=  \frac{1}{n-1} \sum_{i=1}^n  (1-P_{ii}),
}
where $P_{ij}$ is the probability of transition from social class $j$ to class $i$.  Shorrocks notes that $\hat{M}(\P)$ is related to the expected sojourn times (`exit times') for each class $i$, $\tau_i=1/(1-P_{ii})$, through their harmonic mean, 
\ab{
\EH(\tau_i) \eqdef \frac{1}{\displaystyle \frac{1}{n} \sum_{i=1}^n \dspfrac{1}{\tau_i} } \ . 
}
Evaluation gives
\ab{
\EH(\tau_i) = \EH\left(\frac{1}{1-P_{ii}} \right)
=
\frac{1}{\displaystyle \frac{1}{n} \sum_{i=1}^n \dspfrac{1}{1/(1-P_{ii})} }
=
\frac{n}{\displaystyle  \sum_{i=1}^n (1-P_{ii}) },
}
yielding
\ab{
\hat{M}(\P) =  \frac{1}{n-1} \sum_{i=1}^n  (1-P_{ii})
= \left(\frac{n}{n-1} \right) \frac{1}{ \EH (\tau_i) }.
}

Geweke et al. \citet{Geweke:Marshall:and:Zarkin:1986:Mobility} define another social mobility index,
\ab{
M_E(\P) 
= 
\frac{n - \sum_{i=1}^n | \lambda_i(\P)|}{n-1}.
}
They note that when all the eigenvalues of $\P$ are real and nonnegative, then $\hat{M}(\P) = M_E(\P)$, by the trace identity $\sum_{i=1}^n P_{ii} = \sum_{i=1}^n \lambda_i(\P)$.   Numerous papers cite this correspondence \citep{Quah:1996:Aggregate,Redding:2002:Specialization}.  However no expression of the identity in terms of the harmonic and arithmetic means, as in the forms \eqref{eq:HarmonicMean} or \eqref{eq:HarmonicMeanAlt}, is evident.

Next, the eigenvalue conditions C1 and C2 are applied to the identity \eqref{eq:HarmonicMean}.

\begin{theorem}[From \pr{\citet[Theorem 33]{Altenberg:2012:Dispersal}}]\label{Theorem:HarmonicTime}
Let $\P$ be the $n \times n$ transition matrix of an irreducible Markov chain whose eigenvalues are real.  Let $\tau_i (\P) = 1/(1 - P_{ii})$ be the expected sojourn time in state $i$. 
\enumlist{
\item[C1.] If all eigenvalues of $\P$ are positive, then
\an{
\EH(\tau_i (\P) )  > 1 + \frac{1}{n{-}1}.\label{eq:EH>}
}
\item[C2.] If all non-Perron eigenvalues of $\P$ are negative, then
\an{
1 \leq \EH(\tau_i (\P) )  < 1 + \frac{1}{n{-}1}.\label{eq:EH<}
}
\item[C3.] If all non-Perron eigenvalues of $\P$ are zero, then
\an{
1 \leq \EH(\tau_i (\P) )  = 1 + \frac{1}{n{-}1}.\label{eq:EH=}
}
\item[C4.]   If all non-Perron eigenvalues of $\P$ are the same sign or zero, and at least one is nonzero, then inequalities \eqref {eq:EH>} and \eqref {eq:EH<} are unchanged.
}	
\end{theorem}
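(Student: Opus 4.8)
The plan is to reduce the entire statement to the identity of Lemma~\ref{Lemma:HarmonicMean}, which in its rearranged form \eqref{eq:HarmonicMeanAlt} says $1/\EH(\tau_i(\P)) = 1 - \EA(\lambda_i(\P))$. First I would record a lower bound that holds unconditionally: since $\P$ is irreducible with $n \geq 2$, no state can be absorbing, so $0 \leq P_{ii} < 1$ for every $i$; hence each $\tau_i(\P) = 1/(1-P_{ii})$ is finite and satisfies $\tau_i(\P) \geq 1$, and therefore $\EH(\tau_i(\P)) \geq 1$. This already delivers the left-hand inequalities in C2 and C3, and, more importantly, guarantees $\EH(\tau_i(\P)) > 0$, which is what makes it legitimate to pass between a bound on $1/\EH(\tau_i(\P))$ and the reciprocal bound on $\EH(\tau_i(\P))$ without reversing directions unexpectedly.

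Next I would compute the arithmetic mean of the eigenvalues by peeling off the Perron root. Because $\P$ is an irreducible stochastic matrix, Perron--Frobenius theory gives $\lambda_1(\P) = r(\P) = 1$ as a simple eigenvalue with $|\lambda_i(\P)| \leq 1$ for all $i$, and by hypothesis every $\lambda_i(\P)$ is real. Using the trace identity $\sum_i P_{ii} = \sum_i \lambda_i(\P)$, one gets $\EA(\lambda_i(\P)) = \tfrac{1}{n}\bigl(1 + \sum_{i=2}^n \lambda_i(\P)\bigr)$, so the identity becomes $1/\EH(\tau_i(\P)) = \tfrac{n-1}{n} - \tfrac{1}{n}\sum_{i=2}^n \lambda_i(\P)$. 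Thus the whole theorem turns on the sign of the sum of the non-Perron eigenvalues.

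The four cases are then immediate. Under C1 every non-Perron eigenvalue is positive and there is at least one (as $n \geq 2$), so $\sum_{i=2}^n \lambda_i(\P) > 0$, hence $1/\EH(\tau_i(\P)) < \tfrac{n-1}{n}$ and $\EH(\tau_i(\P)) > \tfrac{n}{n-1} = 1 + \tfrac{1}{n-1}$, which is \eqref{eq:EH>}. Under C2 every non-Perron eigenvalue is negative, so $\sum_{i=2}^n \lambda_i(\P) < 0$, giving $1/\EH(\tau_i(\P)) > \tfrac{n-1}{n}$ and $\EH(\tau_i(\P)) < 1 + \tfrac{1}{n-1}$, which together with $\EH(\tau_i(\P)) \geq 1$ is \eqref{eq:EH<}. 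Under C3 the sum vanishes, giving the equality \eqref{eq:EH=}. For C4, a mix of zeros with uniformly-signed nonzero non-Perron eigenvalues keeps $\sum_{i=2}^n \lambda_i(\P)$ strictly positive (respectively strictly negative), so the strict bound of C1 (respectively C2) survives verbatim.

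I do not anticipate a genuine obstacle; the proof is essentially arithmetic once Lemma~\ref{Lemma:HarmonicMean} is in hand. The only two places requiring a line of care are (i) justifying $P_{ii} < 1$ from irreducibility, so that the sojourn times and their harmonic mean are finite and at least $1$, and (ii) tracking the inequality direction when inverting $1/\EH(\tau_i(\P))$, which is safe precisely because that quantity is positive. Everything else is the routine bookkeeping of separating the Perron contribution $\lambda_1(\P) = 1$ from the remaining trace.
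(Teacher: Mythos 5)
Your proposal is correct and follows essentially the same route as the paper: both arguments rest on the trace identity $\sum_i P_{ii} = \sum_i \lambda_i(\P)$ (equivalently Lemma \ref{Lemma:HarmonicMean}), separate the simple Perron root $\lambda_1(\P)=1$, and reduce every case to the sign of $\sum_{i=2}^n \lambda_i(\P)$, with the bound $\EH(\tau_i(\P)) \geq 1$ coming from $\tau_i \geq 1$ exactly as in the paper. The only cosmetic difference is that you invoke the stated identity \eqref{eq:HarmonicMeanAlt} and invert it, whereas the paper re-derives the same equivalences as a chain of iff statements.
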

\begin{proof}
 The following inequalities are readily seen to be equivalent:
\an{
\EH(\tau_i) & = \frac{1}{\displaystyle \frac{1}{n} \sum_{i=1}^n \dspfrac{1}{\tau_i} } > 1 + \frac{1}{n{-}1} =\frac{n}{n{-}1} \label{eq:EHmoreV2} \\ 
\iff  & n{-}1 > \sum_{i=1}^n  \frac{1}{\tau_i} = \sum_{i=1}^n  \frac{1}{1/(1-P_{ii})} 
= n - \sum_{i=1}^n {P}_{ii} \notag\\
\iff  &1  <   \sum_{i=1}^n {P}_{ii} = \sum_{i=1}^n \lambda_i(\P)  = 1 +  \sum_{i=2}^n \lambda_i(\P) \notag\\
\iff  & 0  <   \sum_{i=2}^n \lambda_i(\P) \label{eq:EHLambda}.
}
The analogous equivalence holds if the directions of the inequalities are reversed.  If $\lambda_i(\P) > 0$ for all $i$ then  \eqref{eq:EHLambda}, \eqref{eq:EHmoreV2}, and \eqref{eq:EH>} hold.  Conversely, 
if $\lambda_i(\P) < 0$ for $i = 2, \ldots, n$ then $\sum_{i=2}^n \lambda_i(\P) < 0$, reversing the direction of the inequalities, and the right side of \eqref{eq:EH<} holds;  the left side of \eqref{eq:EH<} clearly holds since $\tau_i (\P) = 1/(1 - P_{ii}) \geq 1$ for each $i$.  If $\lambda_i(\P) = 0$ for $i = 2, \ldots, n$ then $\EH(\tau_i)=1 + {1}/(n{-}1)$.  If $\lambda_i(\P) \geq 0$ for $i \in \{2, \ldots, n\}$, and $\lambda_i(\P) > 0$ for some $i \in \{2, \ldots, n\}$, then $ \sum_{i=2}^n \lambda_i(\P) > 0$ so \eqref {eq:EHLambda} continues to hold; analogously for the reverse inequality.
\end{proof}

We have seen that conditions C1 and C2 are sufficient to determine opposite directions of inequality in two very different expressions, one involving the temporal behavior of a Markov chain, $\EH(\tau_i(\P)) > 1 + 1/(n-1)$ (under  condition C1), and the other involving the interaction of the chain with heterogeneous growth rates, $\dfinline{r( \P[(1-m)\I+m\Q]\D)}{m} < 0$ and $r(\P\D) > r(\P^2\D)$ (with reverse directions under C2).

The inference in these results goes from the eigenvalue sign conditions, C1 and C2, to the inequalities.  The converse, an implication from the inequality directions to the eigenvalue sign conditions, is found only in the case $n=2$.  It would be of empirical interest to know if there exist classes of stochastic matrices $\P$ for $n \geq 3$ in which the temporal behavior has direct implications upon the spectral radius, i.e.\ $\EH(\tau_i(\P))$ tells us about $\dfinline{r( \P[(1-m)\I+m\Q]\D)}{m}$ and $ r(\P^2\D)/r(\P\D)$, or vice versa, without recourse to conditions C1 and C2.  

An example of such a class for $n \geq 3$ is devised using rank-one matrices.   Let $\Pc_n$ be the set of probability vectors of length $n$, so $\evt \x = 1, \x \geq \0$ for  $\x \in \Pc_n$.  Define the set of stochastic matrices
\ab{
\Rc_n \eqdef \setd{ (1-\alpha) \I + \alpha \vv \evt \suchthat \vv \in \Pc_n, \vv > \0, \alpha \in \bigl(0, \min_i{\frac{1}{1-v_i}}\bigr] }.
}
The upper bound on $\alpha$ allows $\alpha \geq 1$ while assuring $1-\alpha+ \alpha v_i \geq 0$ for each $i$, so that $(1-\alpha) \I + \alpha \vv \evt$ is nonnegative. 

\begin{corollary}
Let $\P \in \Rc_n$, let $\Q$ be a symmetrizable irreducible stochastic matrix that commutes with $\P$, and let $\D$ be an $n \times n$ nonscalar positive diagonal matrix.  Then 
\ab{
\df{}{m}r( \P[(1-m)\I + m \Q] \D) & < 0  \text{ if and only if } \EH(\tau_i (\P) )  > 1 + \frac{1}{n{-}1},\\
\df{}{m}r( \P[(1-m)\I + m \Q] \D) &= 0\text{ if and only if } \EH(\tau_i (\P) )  = 1 + \frac{1}{n{-}1},
\intertext{and}
\df{}{m}r( \P[(1-m)\I + m \Q] \D) & > 0 \text{ if and only if } 
1 \leq \EH(\tau_i (\P) )  < 1 + \frac{1}{n{-}1}.
}
\end{corollary}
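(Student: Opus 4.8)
The plan is to show that, for $\P \in \Rc_n$, both sides of each asserted equivalence are governed by the single number $\mathrm{sign}(1-\alpha)$: the derivative conditions on the left via Theorem~\ref{Theorem:Main}, and the harmonic-mean conditions on the right via Theorem~\ref{Theorem:HarmonicTime}. Since the three cases $\alpha<1$, $\alpha=1$, $\alpha>1$ partition $\Rc_n$, matching the two theorems case by case yields the biconditionals.

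The first step is to record the spectrum of $\P = (1-\alpha)\,\I + \alpha\,\vv\evt$. Because $\evt\vv = 1$, the vector $\vv$ is a right Perron eigenvector of $\P$ with eigenvalue $1$, and every $\x$ with $\evt\x=0$ satisfies $\P\x=(1-\alpha)\x$; hence $r(\P)=1$ and the non-Perron spectrum of $\P$ consists of $1-\alpha$ with multiplicity $n-1$. Therefore $\sum_{i=2}^{n}\lambda_i(\P) = (n-1)(1-\alpha)$, which is positive, zero, or negative exactly according as $\alpha<1$, $\alpha=1$, or $\alpha>1$. I would also check that $\P$ fits the hypotheses of Theorem~\ref{Theorem:Main} as the matrix $\A$: it is nonnegative (the constraint $\alpha \le \min_i 1/(1-v_i)$ is precisely what makes $P_{ii} = 1-\alpha(1-v_i) \ge 0$), irreducible (each off-diagonal entry $\alpha v_i$ is strictly positive since $\alpha>0$ and $\vv>\0$), and symmetrizable, since $\D_{\vv}^{-1}\P$ is symmetric (its $(i,j)$ entry equals $\alpha$ for every $i\ne j$), so $\P = \D_{\vv}\,(\D_{\vv}^{-1}\P)$ has the form $\DL\S\DR$ with $\DR=\I$. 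As $\P$ and $\Q$ are irreducible and commute, they automatically share left and right Perron vectors, so Theorem~\ref{Theorem:Main} applies with $\A=\P$, $\B=\Q$, and $r_B=r(\Q)=1$, for which $\M(m)=\P[(1-m)\I+m\Q]$.

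The second step runs the two theorems in parallel over the trichotomy. By Theorem~\ref{Theorem:Main}: if $\alpha<1$ then every eigenvalue of $\P$ is positive, so condition C1 gives $\df{}{m}r(\M(m)\D)<0$; if $\alpha>1$ then every non-Perron eigenvalue of $\P$ is negative, so condition C2 gives $\df{}{m}r(\M(m)\D)>0$; if $\alpha=1$ then every non-Perron eigenvalue of $\P$ is zero, so condition C3 gives $\df{}{m}r(\M(m)\D)=0$. (Condition C4 is never invoked, because the non-Perron eigenvalues of $\P$ are all equal and hence never mixed in sign.) By Theorem~\ref{Theorem:HarmonicTime} --- or, more precisely, by the chain of equivalences established inside its proof, $\EH(\tau_i(\P))$ being greater than, equal to, or less than $1+1/(n-1)$ according as $\sum_{i=2}^{n}\lambda_i(\P)$ is positive, zero, or negative, together with the always-valid bound $\tau_i(\P)\ge 1$ that supplies the ``$1\le$'' in the last case --- the same trichotomy in $\mathrm{sign}(1-\alpha)$ places $\EH(\tau_i(\P))$ strictly above, exactly at, or strictly below $1+1/(n-1)$.

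The final step is to combine. The three cases $\alpha<1$, $\alpha=1$, $\alpha>1$ are mutually exclusive and exhaustive, and in each case both the sign of $\df{}{m}r(\M(m)\D)$ and the location of $\EH(\tau_i(\P))$ relative to $1+1/(n-1)$ are determined; this promotes each of the one-way implications into the stated biconditional. I expect the main point requiring care to be exactly this upgrade from ``if'' to ``if and only if'', which relies on the trichotomy being exhaustive and on the internal equivalences of Theorem~\ref{Theorem:HarmonicTime} being reversible; the only other thing to watch is the verification that $\P$ honestly meets the symmetrizability and irreducibility hypotheses of Theorem~\ref{Theorem:Main}, which is the short computation indicated above.
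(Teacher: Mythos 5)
Your proposal is correct and follows essentially the same route as the paper: verify that $\P \in \Rc_n$ is nonnegative, irreducible and symmetrizable, observe that its non-Perron spectrum is the single value $1-\alpha$ with multiplicity $n-1$, and then run Theorem \ref{Theorem:Main} (C1/C3/C2) and Theorem \ref{Theorem:HarmonicTime} in parallel over the exhaustive trichotomy $\alpha<1$, $\alpha=1$, $\alpha>1$ to get the biconditionals. The only cosmetic differences are your choice of symmetrization ($\D_{\vv}^{-1}\P$ symmetric, i.e.\ $\DL=\D_{\vv}$, $\DR=\I$, versus the paper's $\D_{\vv}^{1/2}(\cdot)\D_{\vv}^{-1/2}$ form) and your direct identification of the eigenspace $\{\x\colon\evt\x=0\}$ in place of the paper's eigenvector computation, both of which are fine.
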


\begin{corollary}
Let $\P \in \Rc_n$, and let $\D$ be an $n \times n$ nonscalar positive diagonal matrix.  Then 
\ab{
r(\P^2 \D) &< r(\P\D) \text{ if and only if } \EH(\tau_i (\P) )  > 1 + \frac{1}{n{-}1},\\
r(\P^2 \D)& = r(\P\D) \text{ if and only if } \EH(\tau_i (\P) )  = 1 + \frac{1}{n{-}1},
\intertext{and}
r(\P^2 \D) &> r(\P\D) \text{ if and only if } 
1 \leq \EH(\tau_i (\P) )  < 1 + \frac{1}{n{-}1}.
}
\end{corollary}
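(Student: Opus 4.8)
The plan is to read the statement off from the preceding corollary, specialized to $\Q := \P$, with Corollary~\ref{Corollary:A2D} and Theorem~\ref{Theorem:HarmonicTime} giving a self-contained alternative. First I would record the structure of $\Rc_n$: write $\P = (1-\alpha)\I + \alpha\,\vv\evt$ with $\vv > \0$, $\evt\vv = 1$, and $\alpha \in (0,\, \min_i 1/(1-v_i)]$. Since $\evt(\vv\evt) = \evt$, the matrix $\P$ is column-stochastic with stationary vector $\vv$; each off-diagonal entry is $\alpha v_i > 0$, so $\P$ is irreducible (indeed primitive); and detailed balance $P_{ij} v_j = \alpha v_i v_j = P_{ji} v_i$ holds, so $\P$ is reversible, hence symmetrizable. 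The rank-one matrix $\vv\evt$ has eigenvalue $1$ on $\mathrm{span}\{\vv\}$ and eigenvalue $0$ on $\{\x : \evt\x = 0\}$, so $\P$ has Perron root $1$ and every non-Perron eigenvalue equal to $1-\alpha$; thus $\sum_{i=2}^n \lambda_i(\P) = (n-1)(1-\alpha)$, which is positive, zero, or negative exactly as $\alpha < 1$, $\alpha = 1$, or $\alpha > 1$, and since $v_i < 1$ all three branches occur for admissible $\alpha$.

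Now $\Q := \P$ is an admissible choice in the preceding corollary, and $\P[(1-m)\I + m\P]$ equals $\P$ at $m = 0$ and $\P^2$ at $m = 1$; because the non-Perron eigenvalues of $\P$ coincide, they are all of one sign or all zero, so the preceding corollary applies and $\dfinline{r(\P[(1-m)\I + m\P]\D)}{m}$ has one definite sign on $[0,1]$, whence the endpoint comparison $r(\P^2\D)$ versus $r(\P\D)$ reproduces that sign, which the same corollary ties to the position of $\EH(\tau_i(\P))$ relative to $1 + 1/(n-1)$. For an argument not routed through that corollary, one applies Corollary~\ref{Corollary:A2D} with $\A = \P$ (using $r(\P) = 1$, so $r(\P)\,r(\P\D) = r(\P\D)$, and that $\P$ is irreducible with $\D$ nonscalar): $r(\P^2\D) < r(\P\D)$ when $\alpha < 1$ (all eigenvalues positive), $r(\P^2\D) = r(\P\D)$ when $\alpha = 1$ (non-Perron eigenvalues zero), and $r(\P^2\D) > r(\P\D)$ when $\alpha > 1$ (non-Perron eigenvalues negative); and for the temporal side $P_{ii} = 1 - \alpha(1-v_i)$ gives $\tau_i(\P) = 1/(\alpha(1-v_i))$, hence $\sum_i 1/\tau_i(\P) = \alpha(n-1)$ and $\EH(\tau_i(\P)) = n/(\alpha(n-1))$ (equivalently, this is Theorem~\ref{Theorem:HarmonicTime} applied to $\sum_{i=2}^n \lambda_i(\P) = (n-1)(1-\alpha)$), so $\EH(\tau_i(\P))$ is above, equal to, or below $1 + 1/(n-1)$ exactly as $\alpha$ is below, equal to, or above $1$, with $\EH(\tau_i(\P)) \ge 1$ always since $\alpha \le \min_i 1/(1-v_i)$ forces each $\tau_i(\P) \ge 1$.

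Finally, the branches $\alpha < 1$, $\alpha = 1$, $\alpha > 1$ are mutually exclusive and exhaustive, and each one simultaneously fixes both the order of $r(\P^2\D)$ relative to $r(\P\D)$ and the order of $\EH(\tau_i(\P))$ relative to $1 + 1/(n-1)$, with the two orders always matching; the three biconditionals then follow by elimination, for instance $r(\P^2\D) < r(\P\D)$ rules out $\alpha \ge 1$, forcing $\alpha < 1$ and hence $\EH(\tau_i(\P)) > 1 + 1/(n-1)$, and conversely. There is no real obstacle here; the only point that takes a moment of care is confirming that members of $\Rc_n$ are symmetrizable and irreducible, so that the cited results apply, together with the observation that the single repeated non-Perron eigenvalue $1-\alpha$ is exactly the quantity whose sign drives both Corollary~\ref{Corollary:A2D} and Theorem~\ref{Theorem:HarmonicTime}.
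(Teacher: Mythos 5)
Your proposal is correct and follows essentially the same route as the paper: verify that members of $\Rc_n$ are irreducible and symmetrizable (the paper exhibits an explicit diagonal similarity, you invoke detailed balance/reversibility — equivalent by the paper's own cited Lemma), observe that every non-Perron eigenvalue equals $1-\alpha$, and then let the sign trichotomy on $1-\alpha$ invoke conditions C1/C3/C2 in Corollary~\ref{Corollary:A2D} and Theorem~\ref{Theorem:HarmonicTime}, with the biconditionals following because the three cases are mutually exclusive and exhaustive. Your explicit computation $\EH(\tau_i(\P)) = n/(\alpha(n-1))$ is a nice sharpening but does not change the argument.
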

\begin{proof}
Any $\P \in \Rc_n$ is irreducible since by hypothesis $\vv > \0$, $\alpha > 0$.   To apply Theorem \ref{Theorem:Main}, we must verify that $\P \in \Rc_n$ is symmetrizable:
\ab{
\P &= (1-\alpha) \I + \alpha \vv \evt =  \D_{\vv}^{1/2}[ (1-\alpha) \I +  \alpha (\D_{\vv}^{1/2} \ev \evt \D_{\vv}^{1/2})]  \D_{\vv}^{-1/2}.
}

Let $\z_i$ be a right eigenvector of $\P \in \Rc_n$ associated with $\lambda_i(\P)$.  Then
\ab{
\lambda_i \z_i = \P \z_i &
= [ (1-\alpha) \I + \alpha \vv \evt]\z_i
=  (1-\alpha) \z_i + \alpha \vv \evt \z_i
\\
\iff & (\lambda_i  - 1 + \alpha) \z_i
= \alpha \vv \evt \z_i 
\\
\iff & \evt \z_i 
= 0, \lambda_i = 1-\alpha \text{ \  or \ } \z_i = \frac{\alpha (\evt \z_i)}{\lambda_i + \alpha - 1} \vv. 
}
The upper bound on $\alpha$ used to define $\Rc_n$ also gives
\an{\label{eq:1-alpha}
1-\alpha \in \bigl[1- \min_i{\frac{1}{1-v_i}}, 1 \big)
= \bigl[- \min_i{\frac{v_i}{1-v_i}}, 1 \big).
}
So either (1) $\lambda_i = 1 - \alpha \in [- \min_j{\frac{v_j}{1-v_j}}, 1)$, by \eqref{eq:1-alpha}, or (2) $\z_i$ is proportional to the right Perron vector of $\P$, which has $\lambda_i=1$, hence $\z_i = \fracinline{\alpha (\evt \z_i)}{\alpha}\ \vv = c \vv$.  Thus all of the non-Perron eigenvalues of $\P$ equal $1-\alpha$, and may be either positive, zero, or negative in the range $ [- \min_i{\frac{v_i}{1-v_i}}, 0)$, in which case exactly one of conditions C1, C3, or C2 is met, respectively, for Theorems \ref{Theorem:Main} and \ref{Theorem:HarmonicTime} and Corollary \ref {Corollary:A2D}, with the consequent implications.
\end{proof}

\subsection{Other Applications}

Condition C2 is met by nonnegative \emph{conditionally negative definite} matrices \citep[Chapter 4]{Bapat:and:Raghavan:1997}, \citep{Bhatia:2007:Positive}.  Symmetric conditionally negative definite matrices arise in the analysis of the one-locus, multiple-allele viability selection model.  If the matrix of fitness coefficients $\W$ allows the existence of a polymorphism with all alleles present, then the polymorphism is globally stable if $\W$ is conditionally negative definite \citep{Kingman:1961:Mathematical}  (Kingman's exact condition  being that they need only be conditionally negative semidefinite).

\section{Open Problems}

The conditions in Theorem \ref{Theorem:Main} that all the eigenvalues of $\A$ be positive (C1), or that all the non-Perron eigenvalues be negative (C2), are clearly very strong, and leave us with no results for intermediate conditions.  Such results are likely to be had by placing additional conditions on the matrices $\E$, $\K$, and $\D$, but this remains an unexplored area.  

The condition of symmetrizability imposes a large constraint on the generality of the results here.  For non-symmetrizable matrices, we lose use of the Rayleigh-Ritz variational formula, and the spectral radius no longer has the sum-of-squares representation \eqref{eq:rho-y2}, which is our principal tool.  It is an open question how many of these results extend to general, non-symmetrizable nonnegative matrices.

\section*{Acknowledgements}
I thank Joel E. Cohen for bringing to my attention the question of $r(\A) \, r(\A\D)$ vs. $r(\A^2\D)$, and Shmuel Friedland for referring Joel Cohen to me.  I thank an anonymous reviewer for pointing out Levinger's Theorem and \citet{Fiedler:1995:Numerical}.  I thank Laura Marie Herrmann for assistance with the literature search.

\end{document}